\definecolor{violet}{rgb}{0.5,0,0.8}
\definecolor{forestgreen}{cmyk}{0.91,0,0.88,0.12}
\newcommand\R{\mathbb R}
\newcommand{\phie}{\phi_{\mathrm{e}}}
\newcommand{\xvec}{{\boldsymbol{x}}}
\newcommand{\xivec}{{\boldsymbol{\xi}}}
\newcommand{\E}{{\mathrm{e}}}
\newcommand{\I}{{\mathrm{i}}}
\let\ul=\underline
\theoremstyle{plain}
\newtheorem{thm}{Theorem}[section]
\newtheorem{lem}[thm]{Lemma}
\newtheorem{cor}[thm]{Corollary}
\newtheorem{pro}[thm]{Proposition}
\newtheorem{rem}[thm]{Remark}
\newtheorem{rmk}[thm]{Remark}
\journal{Nonlinear Analysis: Theory, Methods \& Applications, }
\begin{document}

\begin{frontmatter}



\title{The Dirichlet problem for $-\Delta \varphi= \mathrm{e}^{-\varphi}$ in an infinite sector.  Application to plasma equilibria.}

 \author{Olivier Goubet\corref{cor1}}
  \ead{olivier.goubet@u-picardie.fr}
 \cortext[cor1]{LAMFA CNRS UMR 7352, Universit\'e de Picardie Jules Verne, 33, rue Saint-Leu, 80 039 Amiens, France.}

 \author{Simon Labrunie\corref{cor2}}
  \ead{simon.labrunie@univ-lorraine.fr}
 \cortext[cor2]{Université de Lorraine, Institut Elie Cartan de Lorraine, UMR 7502,  54506 Vand{\oe}uvre-lès-Nancy, France.\\
CNRS, Institut Elie Cartan de Lorraine, UMR 7502,  54506 Vand{\oe}uvre-lès-Nancy, France.}

\begin{abstract}
We consider here a nonlinear elliptic equation in an unbounded sectorial domain of the plane.
{We prove the existence of a minimal solution to this equation and study its properties.}
We infer from this analysis some asymptotics for the stationary solution of an equation arising
in plasma physics.
\end{abstract}

\begin{keyword}
Nonlinear elliptic equations\sep unbounded domains\sep plasma physics

\MSC[2010]  {35J25\sep 35J91\sep 35B40\sep 35B65\sep 82D10}
\end{keyword}
\end{frontmatter}



\section{Setting of the problem}
\label{sec:intro}

Solving elliptic PDE in unbounded domains
of $\mathbb{R}^n$ such as half-spaces occurs naturally
when using some blow-up argument to analyze the properties
of a particular solution of a PDE in a bounded domain.
We refer for instance to~\cite{angenent} where the
analysis of the properties of the solution $u_\varepsilon$ to
\begin{equation}\label{pde1}
\begin{split}
\varepsilon \Delta u_\varepsilon + f(u_\varepsilon)=0,\\
u_\varepsilon=0 \; \mbox{on the boundary}
\end{split}
\end{equation}
in a neighborhood of a point of the boundary,
leads naturally to the study of an elliptic PDE in the half-space.

The nonlinear elliptic PDE
\begin{equation}\label{pde2}
\Delta u + f(u)=0,
\end{equation}
has been widely studied in domains as half-spaces or cylindrical unbounded domains.
We refer here to the articles \cite{BCN93, BCN96, BCN97, BCN98} which have been instrumental
for any later results concerning the symmetry and monotonicity properties of solutions.
In the literature, there are various results concerning the properties
of {\it bounded} solutions to these equations, mainly using consequences of
the maximum principle as the moving plane method or the sliding method
(see~\cite{FS,EL83},\dots) We also refer to~\cite{EH11}
where the properties of solutions in a quarter-space have been studied
using tools from infinite-dimensional dynamical systems.

\medbreak

Let us now describe the equation we are interested in. Consider $\Omega$ a sectorial domain of $\mathbb{R}^2$ defined in polar coordinates as:
$$\Omega = \{\xvec(r,\theta)  \in \mathbb{R}^2; |\theta|< \theta_0\leq \pi\}.$$
We shall sometimes denote this set as $\Omega[\theta_0]$ when we need to specify the opening.
We are interested in the \emph{non-negative} solutions to the problem:
\begin{equation}\label{pde}
\begin{split}
-\Delta \varphi= \E^{-\varphi} \quad\text{in } \Omega, \\
\varphi=0 \quad\text{on } \partial\Omega.
\end{split}
\end{equation}
Our motivation here comes from the article \cite{KaLP13}, where the authors study stationary solutions to {the} Vlasov--Poisson system in a polygon
{and link them to those of a non-linear elliptic equation.}
The singular limit of the {latter} while some {scaling parameter} converges towards $0$ {leads to~\eqref{pde}}.
{Our aim in analyzing this equation is} to provide more insight on the solutions to the original Vlasov--Poisson equation.

\medbreak

More specifically, we shall look for two types of solutions {to~\eqref{pde}}. \emph{Local variational solutions} satisfy
$\varphi \in H^1(\mathcal{O})$ for any bounded open set $\mathcal{O} \subset \Omega$.
{By the Trudinger inequality, this implies $\int_{\mathcal{O}} \exp(\varphi^2) < +\infty$, and thus $\E^{-\varphi} \in L^2(\Omega)$. In this}
case the Dirichlet condition holds in the sense of the usual trace theory:
\begin{equation}\label{pdevar}
\int_\Omega \nabla\varphi\cdot\nabla v= \int_\Omega \E^{-\varphi} v,\quad \forall v\in H^1_0(\Omega) \text{ with {bounded} support.}
\end{equation}
\emph{Very weak solutions} are such that { $\varphi \in L^2(\mathcal{O})$ and $\E^{-\varphi} \in H^{-1}(\mathcal{O})$:}
\begin{equation}\label{pdeweak}
-\int_\Omega \varphi\, \Delta v= \int_\Omega \E^{-\varphi}\, v,\quad \quad \forall v\in H^2\cap H^1_0(\Omega) \text{ with {bounded} support.}
\end{equation}
The trace is defined in a very weak sense on any bounded subset of each side of~$\partial\Omega$, {by an immediate generalization of~\cite{Gris92}. Anyway, as we are interested in non-negative solutions, there automatically holds $\E^{-\varphi} \in L^\infty(\mathcal{O}) \subset H^{-1}(\mathcal{O})$.}

\medbreak

\noindent For both types of solutions, there obviously holds:
\begin{lem}
\label{pro:isom}
Let $\varphi$ be a solution to~\eqref{pde} on the sector~$\Omega$. For any isometry $\mathcal{T}$ of~$\R^2$ (translation, rotation, reflection), the function $ \varphi(\mathcal{T}\xvec)$ is a solution on the sector~$ \mathcal{T}^{-1}(\Omega)$.
\end{lem}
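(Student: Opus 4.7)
The plan is to establish the lemma by the standard change-of-variables argument, exploiting the invariance of the Laplacian under Euclidean isometries. I would set $\psi(\xvec) := \varphi(\mathcal{T}\xvec)$ for $\xvec \in \mathcal{T}^{-1}(\Omega)$. Writing $\mathcal{T}\xvec = A\xvec + b$ with $A \in O(2)$ and $b \in \R^2$, the chain rule gives $\nabla\psi(\xvec) = A^\top (\nabla\varphi)(\mathcal{T}\xvec)$ and $\Delta\psi(\xvec) = \operatorname{tr}\bigl(A^\top (\nabla^2\varphi)(\mathcal{T}\xvec)\, A\bigr) = (\Delta\varphi)(\mathcal{T}\xvec)$ since $A A^\top = I$. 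Consequently $-\Delta\psi = \E^{-\psi}$ wherever the identity makes sense, and the Dirichlet condition transfers because $\xvec \in \partial \mathcal{T}^{-1}(\Omega)$ if and only if $\mathcal{T}\xvec \in \partial\Omega$.

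For local variational solutions I would verify that $\psi \in H^1(\mathcal{O})$ for every bounded open $\mathcal{O} \subset \mathcal{T}^{-1}(\Omega)$, since $\mathcal{T}(\mathcal{O})$ is such a subset of~$\Omega$ and the isometric change of variable (with Jacobian determinant $\pm 1$) preserves $L^2$ norms and weak derivatives. Testing~\eqref{pdevar} against an arbitrary $v \in H^1_0(\mathcal{T}^{-1}(\Omega))$ with bounded support, the substitution reduces both sides to the analogous integrals on~$\Omega$ with test function $v \circ \mathcal{T}^{-1}$, which lies in $H^1_0(\Omega)$ and still has bounded support; the identity therefore holds for~$\psi$ on $\mathcal{T}^{-1}(\Omega)$.

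For very weak solutions the argument is identical: any admissible test function $v$ on $\mathcal{T}^{-1}(\Omega)$ yields an admissible $v \circ \mathcal{T}^{-1}$ on $\Omega$ whose Laplacian satisfies $\Delta(v \circ \mathcal{T}^{-1}) = (\Delta v) \circ \mathcal{T}^{-1}$ by the same chain-rule computation, so~\eqref{pdeweak} for~$\varphi$ translates directly into the analogous identity for~$\psi$. There is essentially no obstacle: everything reduces to the Euclidean invariance of the Laplacian, of Lebesgue measure, and of the Sobolev spaces $H^k$. In particular, it is irrelevant whether $\mathcal{T}$ preserves orientation, since only $AA^\top = I$ enters the computation, which is why reflections are covered alongside translations and rotations.
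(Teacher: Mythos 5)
Your proof is correct and is exactly the standard change-of-variables argument the paper has in mind; the paper itself offers no proof, stating only that the claim ``obviously holds,'' so your write-out of the Euclidean invariance of the Laplacian, Lebesgue measure, and Sobolev spaces simply fills in the omitted routine details for both the local variational and very weak formulations.
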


\medbreak

Since $0$ is a subsolution to the problem~\eqref{pde}, the existence of a solution is equivalent to that of a non-negative supersolution to the problem.
We will develop this in the sequel. Furthermore, any non-negative solution to~\eqref{pde} in~$\Omega[\theta_0]$ is a supersolution in a smaller sector $\Omega[\theta_1],\ \theta_1 < \theta_0$. Therefore, the existence of a solution in the split plane~$\Omega[\pi]$ implies the solvability of~\eqref{pde} in any sector.
By symmetry, it is enough to solve the mixed Dirichlet--Neumann problem in the upper half-plane
\begin{equation}
\begin{split}
-\Delta \varphi_* = \E^{-\varphi_*} \quad\text{in } \R^2_+ = [0 < \theta < \pi], \\
\varphi_* = 0 \text{ on } [\theta = \pi] ,\quad \partial_n \varphi_* = 0 \text{ on } [\theta = 0].
\end{split}
\label{pde:mixed}
\end{equation}
Glueing this $\varphi_*$ to its even reflection with respect to the axis $[\theta=0]$ yields a solution to~\eqref{pde} on~$\Omega[\pi]$.

\begin{rem}
It is worth pointing out that there exists no {(non-negative, very weak)} solution to~\eqref{pde} in the whole plane.
\end{rem}

The article is written as follows. In a second section
we construct a supersolution to the equation in the split plane. For this purpose
we use a constructive method which relies on complex analysis.
In a third section  we discuss some properties, {such} as monotonicity, symmetry or regularity, of the \emph{minimal} positive solution; this minimal
solution is relevant for the Physics of the original problem.
In a fourth section we prove the non-uniqueness of solutions and list some of their properties.
Eventually, we discuss the application to the stationary Vlasov--Poisson system
in a last section.
In this last section, we also {show the link between this asymptotic and the} boundary blow-up solutions for $\Delta u = \E^u$, {see}~\cite{BM92, BM95, MV03, LM94, DDGR07}. Boundary blow-up (or large) solutions {were} introduced in the seminal articles~\cite{keller, osserman}.

\section{Construction of a solution in the split plane}
We begin with a construction inherited from complex analysis.
It is worth pointing out that {this construction} method works
for any sectorial domain.

\begin{pro}
There exists a solution $\varphi_*$ to equation \eqref{pde:mixed} in the split plane.
\end{pro}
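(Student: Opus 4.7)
The plan is the classical sub-\,/\,super-solution method. Since $\underline{\varphi} \equiv 0$ is manifestly a subsolution of~\eqref{pde:mixed} with vanishing Dirichlet trace, existence reduces to producing a non-negative supersolution $\overline{\varphi}$ with $\overline{\varphi} \geq 0$ on the Dirichlet portion of the boundary; a fixed-point argument then yields a solution $\varphi_*$ with $0 \leq \varphi_* \leq \overline{\varphi}$.

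The complex-analytic ingredient is the identity
\[
-\Delta\bigl(2\log \operatorname{Re} F\bigr) \;=\; \frac{2\,|F'|^{2}}{(\operatorname{Re} F)^{2}},
\]
valid for any holomorphic $F$ with $\operatorname{Re} F > 0$. Since $\E^{-2\log \operatorname{Re} F} = (\operatorname{Re} F)^{-2}$, the function $\varphi = 2\log \operatorname{Re} F$ is a supersolution (resp.\ a solution) of $-\Delta \varphi = \E^{-\varphi}$ if and only if $|F'|^{2} \geq \tfrac{1}{2}$ (resp.\ $= \tfrac{1}{2}$) pointwise. The simplest instance, $F(z) = 1 - \I z/\sqrt{2}$ on $\R^{2}_{+}$, reproduces the explicit ``half-plane'' solution $\psi(x,y) = 2\log(1+y/\sqrt{2})$ with $\psi=0$ on $\{y=0\}$.

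On the split plane $\Omega = \Omega[\pi]$ I would introduce the one-parameter holomorphic family
\[
F_\alpha(z) := 1 + \alpha\,\sqrt{z}, \qquad \alpha > 0,
\]
with the principal branch of $\sqrt{\cdot}$. Writing $z = r\,\E^{\I\theta}$ with $\theta \in (-\pi,\pi)$ one finds $\operatorname{Re} F_\alpha(z) = 1 + \alpha\sqrt{r}\cos(\theta/2) \geq 1$, with equality precisely on the slit $\partial\Omega$, and $|F_\alpha'(z)|^{2} = \alpha^{2}/(4|z|)$. Consequently,
\[
\overline{\varphi}_\alpha(z) := 2\log \operatorname{Re} F_\alpha(z)
\]
is non-negative on $\Omega$, vanishes on $\partial\Omega$, and is a classical supersolution of~\eqref{pde} on the bounded region $\Omega \cap B_{\alpha^{2}/2}$.

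Existence then follows by exhaustion. Let $\Omega_n := \Omega \cap B_n$; applying the standard sub-\,/\,super-solution theorem on $\Omega_n$ with the pair $(0, \overline{\varphi}_{\sqrt{2n}})$ produces a smooth Dirichlet solution $\varphi_n$ with $0 \le \varphi_n \le \overline{\varphi}_{\sqrt{2n}}$. Since $\E^{-\varphi}$ is decreasing, a standard comparison gives $\varphi_n \le \varphi_{n+1}$ on $\Omega_n$, so $(\varphi_n)$ is monotone non-decreasing. The uniform $L^{\infty}$ bound $0 \le \E^{-\varphi_n} \le 1$ together with interior elliptic estimates permits passing to the limit on every compact $K \subset \Omega$, yielding a smooth solution $\varphi_*$ of~\eqref{pde} on the whole split plane. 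By Lemma~\ref{pro:isom} and the left-right symmetry of the construction, the restriction of $\varphi_*$ to $\R^{2}_{+}$ solves the mixed problem~\eqref{pde:mixed}, and the even reflection across $\{\theta=0\}$ closes the argument.

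The principal obstacle is the locally uniform upper bound on $(\varphi_n)$ required for the passage to the limit: no single $\overline{\varphi}_\alpha$ is a supersolution on all of $\Omega$, and the bound $\varphi_n \le \overline{\varphi}_{\sqrt{2n}}$ degenerates as $n \to \infty$. I would handle this by exploiting the scaling invariance $\varphi \mapsto \varphi(\lambda\,\cdot) - 2\log\lambda$ of the equation, combined with comparison on concentric annuli on which $\overline{\varphi}_\alpha$ remains a supersolution, so as to dominate each $\varphi_n$ on any prescribed compact subset of $\Omega$ by a single $n$-independent majorant.
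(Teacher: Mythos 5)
Your overall strategy — reduce to finding a non-negative supersolution, and manufacture supersolutions via the Liouville identity $-\Delta\bigl(2\log \operatorname{Re} F\bigr)=2|F'|^2/(\operatorname{Re} F)^2$ for holomorphic $F$ — is sound and in fact close in spirit to what the paper does. The paper also exploits the conformal invariance of $\Delta w = 4\E^{2w}$, but pushes it one step further: via the M\"obius map $\Phi(z_1)=1/(z_1+\I)$ it transports the split-plane problem to a \emph{bounded} disk, at the cost of producing a Dirichlet datum $-\log|z_2|^2$ that blows up at one boundary point. The authors truncate this datum, solve the (now coercive, variational) problems $w_2^k$, and obtain a global two-sided a priori bound $w_2^0 \le w_2^k \le -\log|z_2|^2$ \emph{for free}, because the harmonic function $-\log|z_2|^2$ is a supersolution on the whole disk and the bounded $w_2^0$ is a subsolution. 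Passing to the monotone limit and conformally mapping back yields $\varphi_*$ together with the explicit bound~\eqref{bnd:phi*}. In short, the paper trades an unbounded domain for unbounded boundary data, which is much easier to truncate.

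The gap in your write-up is exactly the one you flag at the end, and it is not cosmetic. You exhaust the domain by $\Omega_n=\Omega\cap B_n$, use $\overline{\varphi}_{\sqrt{2n}}$ as a supersolution on $\Omega_n$, and get a monotone sequence $\varphi_n$. Without a locally uniform upper bound, the monotone limit could be $+\infty$ on $\Omega$, and no solution is produced; the bound $\varphi_n\le\overline{\varphi}_{\sqrt{2n}}$ gives, at a fixed point $z_0$, only $\varphi_n(z_0)\le 2\log\bigl(1+\sqrt{2n}\,\sqrt{|z_0|}\bigr)\sim\log n$, which diverges. The fix you sketch does not close this. Scaling sends $\varphi$ to $\varphi(\lambda\cdot)-2\log\lambda$, and if you apply it to the bound $\varphi_n\le\overline{\varphi}_{\sqrt{2n}}$ on $\Omega_n$ you simply reparametrize it without improving its value at a fixed $z_0$; one checks directly that the rescaled bound reproduces $2\log\bigl(1+\sqrt{2n}\,\sqrt{|z_0|}\cos(\theta_0/2)\bigr)$. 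As for ``comparison on concentric annuli on which $\overline{\varphi}_\alpha$ remains a supersolution'': to compare $\varphi_n$ with $\overline{\varphi}_\alpha$ on an annulus $\Omega\cap(B_{\alpha^2/2}\setminus B_a)$ you need $\varphi_n\le\overline{\varphi}_\alpha$ on the \emph{inner} circle $\partial B_a\cap\Omega$, which is precisely the unknown estimate. No single $\overline{\varphi}_\alpha$ is a supersolution on all of $\Omega$ because $|F_\alpha'(z)|^2=\alpha^2/(4|z|)\to 0$ at infinity, and this obstruction is essential: no holomorphic $F$ with $\operatorname{Re} F>0$ on the split plane and $|F'|^2\ge 1/2$ everywhere will do. So the argument needs a genuinely new ingredient — such as the paper's conformal transport to a bounded domain, or some other mechanism producing a supersolution (or a priori bound) on the whole split plane — before the exhaustion can be closed.
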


Let  $z_2 = \Phi(z_1)$ or $z_1 = \Psi(z_2)$ be a conformal mapping between the complex $z_1$ and $z_2$-planes, and let $D_1,\ D_2$ be two domains conformally mapped to one another. Suppose we are given two functions $w_1,\ w_2$ on $D_1$ and $D_2$ respectively, which are transformed into each other by the formulas:
\begingroup
\begin{eqnarray}
w_2(z_2) &=& \log |\Psi'(z_2)| + w_1( \Psi(z_2) ) ;
\label{w2w1}\\
w_1(z_1) &=& \log |\Phi'(z_1)| + w_2( \Phi(z_1) ) .
\label{w1w2}
\end{eqnarray}
\endgroup
Using $\Delta = 4\,\partial_z\partial_{\bar z}$ and the fact that the logarithm of the modulus of an analytic function is harmonic, one easily checks the following lemma.

\begin{lem}
\label{lem:w1w2}
Let $\Delta_i,\ i=1,\ 2$ be the Laplacian in the $z_i$-plane, and let  $w_1,\ w_2$ be related by~\eqref{w2w1} or~\eqref{w1w2}.
Then, $w_1$ satisfies $\Delta_1 w_1 = 4\E^{2w_1}$ in~$D_1$ if, and only if, $w_2$ satisfies $\Delta_2 w_2 = 4\E^{2w_2}$ in~$D_2$.
\end{lem}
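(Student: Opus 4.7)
The plan is a direct computation: I would apply the Wirtinger-style Laplacian $\Delta = 4\,\partial_z\partial_{\bar z}$ to one of the transformation formulas and reduce $\Delta_2 w_2 = 4\E^{2w_2}$ to $\Delta_1 w_1 = 4\E^{2w_1}$ after composition with the (bijective) conformal map. Since the two relations \eqref{w2w1} and \eqref{w1w2} are symmetric under swapping the roles of $z_1$ and $z_2$, it suffices to establish one direction, say the ``only if'' from the equation in $D_1$ to the equation in $D_2$, using \eqref{w2w1}.

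First I would record the two basic analytic facts underlying the argument: (i) $\Psi$ is holomorphic, so $\partial_{\bar z_2}\Psi = 0$ and $\partial_{z_2}\bar\Psi = 0$; and (ii) $\log|\Psi'(z_2)|$ is the real part of the (locally defined) holomorphic function $\log \Psi'(z_2)$, hence harmonic on $D_2$. Then I would compute the Laplacian of the composition $w_1\circ \Psi$ using the chain rule for $\partial_{z_2}$ and $\partial_{\bar z_2}$:
\begin{equation*}
\partial_{\bar z_2}(w_1\circ\Psi)(z_2) = (\partial_{\bar z_1} w_1)(\Psi(z_2))\,\overline{\Psi'(z_2)},
\end{equation*}
and then, using holomorphy of $\Psi'$ to discard the term in which $\partial_{z_2}$ hits $\overline{\Psi'(z_2)}$,
\begin{equation*}
\Delta_2(w_1\circ\Psi)(z_2) = 4\,(\partial_{z_1}\partial_{\bar z_1}w_1)(\Psi(z_2))\,|\Psi'(z_2)|^2 = (\Delta_1 w_1)(\Psi(z_2))\,|\Psi'(z_2)|^2.
\end{equation*}
Combining with $\Delta_2 \log|\Psi'(z_2)| = 0$ from (ii), the transformation rule \eqref{w2w1} yields
\begin{equation*}
\Delta_2 w_2(z_2) = |\Psi'(z_2)|^2\, (\Delta_1 w_1)(\Psi(z_2)).
\end{equation*}

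On the nonlinear side, exponentiating \eqref{w2w1} gives $\E^{2w_2(z_2)} = |\Psi'(z_2)|^2\,\E^{2w_1(\Psi(z_2))}$, so the equation $\Delta_2 w_2 = 4\E^{2w_2}$ is equivalent, after dividing by the nonvanishing factor $|\Psi'(z_2)|^2$, to $(\Delta_1 w_1)(\Psi(z_2)) = 4\,\E^{2w_1(\Psi(z_2))}$. Since $\Psi$ is a bijection $D_2\to D_1$, this is equivalent to $\Delta_1 w_1 = 4\,\E^{2w_1}$ on $D_1$. The reverse implication follows in the same way using \eqref{w1w2} and $\Phi = \Psi^{-1}$ (or simply by symmetry), finishing the proof.

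The only real obstacle is bookkeeping with $\partial_z$, $\partial_{\bar z}$ and the chain rule under holomorphic change of variables; everything else reduces to the two observations that a conformal factor $|\Psi'|^2$ appears in front of the Laplacian and that the additive logarithmic term is harmonic and exactly compensated by its exponential contribution on the right-hand side.
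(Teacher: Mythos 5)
Your proof is correct and follows exactly the approach the paper sketches: it uses the Wirtinger form $\Delta = 4\,\partial_z\partial_{\bar z}$ together with the harmonicity of $\log|\Psi'|$ to show that the Laplacian transforms by the conformal factor $|\Psi'|^2$, which is also the factor appearing in $\E^{2w_2}$. You have simply written out the chain-rule computation that the paper leaves to the reader.
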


\medbreak

In this section, we denote $(x_i,y_i)$ and $(r_i,\theta_i)$ the Cartesian and polar coordinates in the $z_i$-plane. We {choose} $D_1$ as the upper half-plane $[0 < \theta_1 < \pi] = [{y_1} > 0]$, and we introduce the mixed Dirichlet--Neumann problem
\begin{equation}
\Delta_1 w_1 = 4\E^{2w_1} \text{ in } D_1,\quad w_1 = 0 \text{ on } \Gamma_1^D,\quad \partial_{n_1} w_1 = 0 \text{ on } \Gamma_1^N,
\label{pbw1m:max}
\end{equation}
where  $\Gamma_1^D = [\theta_1 = \pi] = \{(x_1,0) : x_1<0\}$, and $\Gamma_1^N = [\theta_1=0] = \{(x_1,0) : x_1>0\}$. This is obviously related to~\eqref{pde:mixed} (set $\varphi_*(\xvec) = -2 w_1(\xvec/\sqrt{8})$).

\medbreak

Consider now the conformal mapping
$$\Phi(z_1)=\frac{1}{z_1+\I},\quad \Psi(z_2)=\frac{1}{z_2}-\I.$$
The half-plane is mapped by $\Phi$ onto the disk $D_2$ centered at $\frac{1}{2\I}$ and of radius $\frac 1 2$; the negative and positive real half-axes $\Gamma_1^D$ and~$\Gamma_1^N$ are respectively mapped to the left and right half-circles $\Gamma_2^D$ and~$\Gamma_2^N$ of this disk.

\medbreak

By Lemma~\ref{lem:w1w2}, the function $w_2$ defined by~\eqref{w2w1} satisfies $\Delta_2 w_2 = 4\E^{2w_2}$ in~$D_2$.
What about the boundary conditions? On the Dirichlet half-circle~$\Gamma_2^D$, one has
\begin{equation}\label{halfdirichlet}
w_2(z_2)=\log |\Psi'(z_2)|=-\log|z_2|^2\geq 0;
\end{equation}
it is worth observing that this function is non-negative and singular at $z_2=0$ (corresponding to $z_1=\infty$) only.
On the Neumann half-circle~$\Gamma_2^N$, we compute as follows. In polar coordinates, we have:
\begin{equation}\label{calcul1}
w_2(r_2\,\E^{\I\theta_2})=-2\log r_2 + w_1\left( \frac{\cos \theta_2}{r_2} - \I\left(1+\frac{\sin\theta_2}{r_2}\right) \right).
\end{equation}
Parametrizing the boundary as $z_2=\frac12\left(\E^{\I \vartheta}-\I\right)$, we observe that:
\begin{eqnarray*}
&&r_2^2=\frac{1-\sin\vartheta}{2},\quad r_2\cos\theta_2=\frac12\,\cos\vartheta,\quad r_2\sin\theta_2=\frac{-1+\sin\vartheta}{2} \,;
\\
&&2\theta_2= \vartheta- \frac \pi 2,\quad (\cos \vartheta, \sin \vartheta)=(-\sin 2\theta_2,\cos 2\theta_2).
\end{eqnarray*}
Recalling that $\partial_{y_1}w_1= - \partial_{n_1}w_1=0$ on~$\Gamma_1^N$, we deduce:
\begin{eqnarray*}
\label{calcul2}
\partial_{x_2}w_2 &=& -\frac{2\cos \theta_2}{r_2}-\frac{\cos 2\theta_2}{r_2^2}\partial_{x_1}w_1 \quad {\text{on } \Gamma_2^N}, \\
\label{calcul3}
\partial_{y_2}w_2 &=& -\frac{2\sin \theta_2}{r_2}-\frac{\sin 2\theta_2}{r_2^2}\partial_{x_1}w_1 \quad {\text{on } \Gamma_2^N}.
\end{eqnarray*}
Therefore
\begin{equation}\label{calcul4}
\partial_{n_2}w_2=\frac{2\sin \theta_2}{r_2}=-2 \quad \text{on } \Gamma_2^N.
\end{equation}

\medbreak

Next, we proceed to a truncation on the boundary condition, setting $g_2^k=\min(k, -\log|z_2|^2)$, i.e., we solve the mixed Dirichlet--Neumann problem
\begin{equation}
\Delta_2 w_2^k = 4\E^{2w_2^k} \text{ in } D_2,\quad {w_2^k} = g_2^k \text{ on } \Gamma_2^D,\quad \partial_{n_2} {w_2^k} = -2 \text{ on } \Gamma_2^N.
\label{pbw2km:max}
\end{equation}
The unique variational solution minimizes the strictly convex l.s.c. functional
$$\int_{D_2} \left\{ \frac12\,| \nabla w |^2 + 2\,\E^{2w} \right\} + \int_{\Gamma_2^N} 2\,w$$
on the affine space $V(g_2^k) := \{ w \in H^1(D_2) : w = g_2^k \text{ on } \Gamma_2^D \}$.
We remark that $w_2^k \in C(\overline{D_2})$:
{as argued in Section~\ref{sec:intro}, $\Delta_2 w_2^k = \E^{2w_2^k} \in L^2(D_2)$ by the Trudinger inequality};
as the boundary data are smooth enough (actually, {$g_2^k \in H^{3/2-\epsilon}(\Gamma_2^D)$}), one has $w_2^k \in H^{3/2-\epsilon}(D_2) \subset C(\overline{D_2})$; the regularity is limited by the change of boundary conditions~\cite{Gris92}.

\medbreak

\noindent We now state that $u_2(z_2)=-\log|z_2|^2$ is a supersolution to~\eqref{pbw2km:max}.
\begin{lem}
For any $z_2$ in $D_2$, $w_2^k(z_2)\leq -\log|z_2|^2$.
\end{lem}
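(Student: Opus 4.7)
The plan is to establish that $u_2(z_2):=-\log|z_2|^2$ is a supersolution to~\eqref{pbw2km:max}, then compare it with the variational solution $w_2^k$ by testing the Euler--Lagrange equation against the positive part $(w_2^k-u_2)^+$.

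The first step verifies the supersolution property. Since the pole $z_2=0$ lies on $\partial D_2$ but outside $D_2$, the function $u_2$ is harmonic in $D_2$, so $\Delta_2 u_2=0\le 4\E^{2u_2}$. On $\Gamma_2^D$, the inequality $u_2\ge g_2^k=\min(k,u_2)$ is immediate. For the Neumann part, notice that $u_2=\log|\Psi'(z_2)|$ is exactly the image of the (harmonic) function $w_1\equiv 0$ under the transformation~\eqref{w2w1}; the computation~\eqref{calcul1}--\eqref{calcul4}, applied with $\partial_{x_1}w_1\equiv 0$, then gives $\partial_{n_2}u_2=-2$ on $\Gamma_2^N$.

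The second step sets $\phi:=(w_2^k-u_2)^+$ and plugs it into the weak formulation of~\eqref{pbw2km:max}. Because $w_2^k\in C(\overline{D_2})$ is bounded while $u_2\to +\infty$ at $z_2=0$, the function $\phi$ vanishes in a neighborhood of the singularity, and since $u_2$ is smooth away from it, $\phi\in H^1(D_2)$. Moreover, $w_2^k=g_2^k\le u_2$ on $\Gamma_2^D$ gives $\phi|_{\Gamma_2^D}=0$, so $\phi$ is admissible and
\[
\int_{D_2}\nabla w_2^k\cdot\nabla\phi+\int_{D_2}4\E^{2w_2^k}\,\phi+2\int_{\Gamma_2^N}\phi=0.
\]
An integration by parts applied to the harmonic function $u_2$, legitimate since $\mathrm{supp}\,\phi$ is a compact subset of $\overline{D_2}\setminus\{0\}$, yields $\int_{D_2}\nabla u_2\cdot\nabla\phi=-2\int_{\Gamma_2^N}\phi$. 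Subtracting the two identities and using $\nabla(w_2^k-u_2)\cdot\nabla\phi=|\nabla\phi|^2$ pointwise leads to
\[
\int_{D_2}|\nabla\phi|^2+\int_{D_2}4\E^{2w_2^k}\,\phi=0,
\]
which forces $\phi\equiv 0$ since both integrands are nonnegative. The main obstacle along the way is the logarithmic blow-up of $u_2$ at $z_2=0\in\partial D_2$, which keeps $u_2$ out of $H^1(D_2)$ and forbids a naive $H^1$-comparison; it is precisely the $L^\infty$-bound on $w_2^k$ that bypasses this issue, by confining $\mathrm{supp}\,\phi$ to a region where $u_2$ is smooth and all the computations are routine.
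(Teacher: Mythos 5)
Your proof is correct and follows the same strategy as the paper's: show $u_2:=-\log|z_2|^2$ solves the linear analogue with matching Neumann data and dominating Dirichlet data, then test the difference against $(w_2^k-u_2)^+$, whose support avoids the logarithmic singularity at $z_2=0$ because $w_2^k\in C(\overline{D_2})$ is bounded. Your remark that $u_2=\log|\Psi'|$ is the pushforward of $w_1\equiv 0$ under~\eqref{w2w1}, so that~\eqref{calcul1}--\eqref{calcul4} directly give $\partial_{n_2}u_2=-2$, is a clean way to package the boundary computation, and your explicit justification of the integration by parts near the singularity spells out what the paper leaves implicit; otherwise the two arguments coincide.
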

\begin{proof}
Computing as above, we see that $u_2$ solves the problem
$$ \Delta_2 u_2 = 0 \text{ in } D_2,\quad u_2 = -\log r_2^2 \text{ on } \Gamma_2^D,\quad \partial_{n_2} u_2 = -2 \text{ on } \Gamma_2^N. $$
It is not a variational solution, but it is smooth where it is bounded. Moreover, $u_2\ge0$ in~$D_2$. As a consequence, there holds:
$$ \int_{D_2} \nabla u_2 \cdot \nabla v + \int_{\Gamma_2^N} 2\,v = 0,$$
for all $v\in H^1(D_2)$ such that $v = 0$ on~$\Gamma_2^D$ and $u_2$ is bounded above on~$\mathrm{supp}\, v$.
On the other hand, the variational formulation of~\eqref{pbw2km:max} reads
\begin{equation}
\label{varfor}
\int_{D_2} \nabla w_2^k \cdot \nabla v + \int_{D_2} 4\E^{2w_2^k}\, v= - 2\int_{\Gamma^N_2} v.
\end{equation}
Therefore
$$ \int_{D_2} \left\{ \nabla (w_2^k - u_2) \cdot \nabla v + 4\E^{2w_2^k}\,v \right\}= 0,$$
for any admissible~$v$. The function $v = (w_2^k - u_2)_+$ is admissible: it vanishes both on~$\Gamma_2^D$ and where $u_2 \ge \max w_2^k$, hence $u_2$ is bounded above on the support of~$v$. One finds as usual $(w_2^k - u_2)_+ = 0$, i.e., $w_2^k \le u_2$.
\end{proof}

\medbreak

Obviously, $w_2^K$~also is a supersolution to~\eqref{pbw2km:max} for $K>k$; therefore,
$$w_2^0\le w_2^k \le  w_2^K \le u_2,\quad {\text{ for } 0 \le k \le K}.$$
We deduce that, for all $z_2\in \overline{D_2} \setminus \{0\}$, $w_2^k(z_2)$ converges to a limit $w_2(z_2) \le u_2(z_2)$. By the monotone convergence theorem, $w_2^k \to w_2$ and $\E^{2w_2^k} \to \E^{2w_2}$ in $L^p(\mathcal{K})$, for any compact $\mathcal{K} \subset \overline{D_2} \setminus \{0\}$ and $p < \infty$. This implies $\Delta_2 w_2 = 4\E^{2w_2}$ in the sense of distributions in~$D_2$. Furthermore, one can consider boundary conditions in a very weak sense~\cite{Gris92}.
Summing up, $w_2$ solves the problem:
$$\Delta_2 w_2 = 4\E^{2w_2} \text{ in } D_2,\quad w_2 = -\log |z_2|^2 \text{ on } \Gamma_2^D,\quad \partial_{n_2} w_2 = -2 \text{ on } \Gamma_2^N.$$
As $w_2^0$ is bounded on~$\overline{D_2}$, the limit is bounded below: $w_2 \ge m$ on $\overline{D_2}$.
Using the conformal mapping~$\Phi$, the function $w_1$ defined by~\eqref{w1w2} is a solution to~\eqref{pbw1m:max}
which satisfies the bounds
\begin{equation}
m + \log |\Phi'(z_1)|= m - \log(x_1^2+(1+ y_1)^2)\leq w_1(z_1)\leq 0 \quad\text{on } \overline{D_1} .
\label{inequation}
\end{equation}

\section{The minimal solution}
{F}rom the previous section, we have constructed a solution $\varphi_*$ to~\eqref{pde:mixed}, which satisfies the bounds (cf.~\eqref{inequation}):
\begin{equation}
0 \le \varphi_*(\xvec) \le 2\,\log(1 + {\tfrac{\sqrt{2}}2}\,r\,\sin\theta + \tfrac18\,r^2 ) - 2\,m
\label{bnd:phi*}
\end{equation}
Thus, $\varphi_*$, extended by reflection to~$\Omega[\pi]$,
{ is bounded on any bounded subset of~$\Omega[\pi]$.}
We will provide a better version of this upper bound in the sequel.

\smallbreak

Also, notice that the conditions, valid for any bounded subset $\mathcal{O} \subset \Omega[\pi]$
$$\varphi_* \in L^\infty(\mathcal{O}), \quad \Delta \varphi_* = \E^{-\varphi_*}\in L^\infty(\mathcal{O}),\quad \varphi_* = 0 \text{ on } \partial\Omega[\pi]$$
actually imply $\varphi_* \in H^1(\mathcal{O})$ (see~\cite{Gris92} or Proposition~\ref{local-rgty} below);{therefore $\varphi_*$ is a local variational solution to~\eqref{pde} in~$\Omega[\pi]$.}

\subsection{A limiting process}\label{limitingproblem}
Consider the truncated domain
$$\Omega_R= \{\xvec(r,\theta) \in \mathbb{R}^2; |\theta|< \theta_0\leq \pi,\ 0 < r <R \}.$$
\noindent
The proof of the following result is standard.
\begin{lem}\label{truncatedproblem}
There exists a unique {variational} solution $u_R$ to the problem
\begin{equation}\label{pdeR}
\begin{split}
-\Delta u_R= \E^{-u_R} \quad\text{in } \Omega_R, \\
u_R=0 \quad\text{on } \partial\Omega_R.
\end{split}
\end{equation}
This solution is positive and is symmetric with respect to $\theta\mapsto -\theta$.
\end{lem}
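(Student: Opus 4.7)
The plan is to realize $u_R$ as the unique minimizer on $H^1_0(\Omega_R)$ of the energy
$$J(u) := \frac12 \int_{\Omega_R} |\nabla u|^2 + \int_{\Omega_R} \E^{-u},$$
and then to extract positivity and the reflection symmetry as consequences of uniqueness.

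First I would check that $J$ is well defined on $H^1_0(\Omega_R)$: the Trudinger inequality (recalled in Section~\ref{sec:intro}) gives $\E^{-u} \in L^p(\Omega_R)$ for every $p<\infty$, so $J$ takes values in $[0,+\infty)$. Next I would verify the three standard ingredients of the direct method of the calculus of variations: coercivity (by Poincar\'e, $J(u) \ge \tfrac12 \|u\|_{H^1_0}^2$, since $\E^{-u} \ge 0$); strict convexity (inherited from $p \mapsto |p|^2$ and from the convexity of $s \mapsto \E^{-s}$); and weak lower semicontinuity (along $u_n \rightharpoonup u$ in $H^1_0$, Rellich compactness produces a.e.\ convergence of a subsequence, to which Fatou's lemma applies for the $\E^{-u}$ term). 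These yield a \emph{unique} minimizer $u_R \in H^1_0(\Omega_R)$, and its Euler--Lagrange equation is exactly~\eqref{pdeR} in the variational sense.

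For positivity, I would compare $u_R$ with its positive part $u_R^+$: since $|\nabla u_R^+| \le |\nabla u_R|$ a.e.\ and $\E^{-u_R^+} \le \E^{-u_R}$ (with equality exactly on $\{u_R \ge 0\}$), one has $J(u_R^+) \le J(u_R)$. Uniqueness forces $u_R = u_R^+$, i.e.\ $u_R \ge 0$, after which the strong maximum principle, applied to $-\Delta u_R = \E^{-u_R} > 0$, upgrades this to $u_R > 0$ inside $\Omega_R$. Finally, $\Omega_R$ is invariant under the reflection $(r,\theta)\mapsto(r,-\theta)$, so Lemma~\ref{pro:isom} shows that $(r,\theta)\mapsto u_R(r,-\theta)$ is another variational solution of~\eqref{pdeR}, and uniqueness identifies the two.

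I do not expect any serious obstacle --- this is exactly why the authors describe the proof as ``standard'' --- the only minor point that deserves care is the finiteness of $\int \E^{-u}$ for $u\in H^1_0(\Omega_R)$, which is handled by Trudinger as above.
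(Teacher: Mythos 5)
Your proof is correct and is precisely the ``standard'' argument the paper alludes to without spelling out: existence and uniqueness from the direct method applied to the strictly convex coercive functional $J$, positivity from the truncation $u_R^+$ combined with uniqueness and the strong maximum principle, and symmetry from reflection invariance of $\Omega_R$ together with uniqueness. The only step you compress slightly is the G\^ateaux differentiability of $J$ (needed to pass from ``minimizer'' to ``variational solution''), but this is routine once Trudinger's inequality gives $\E^{-u}\in L^p(\Omega_R)$ for every finite $p$, so there is no real gap.
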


Let us observe that, for $R'\ge R$, both $u_{R'}$ and $\varphi_*$ are supersolutions to \eqref{pdeR}.
Therefore, for any $\xvec$ in $\Omega_R$
$$ u_R(\xvec)\leq u_{R'}(\xvec) \leq \varphi_*(\xvec).$$
Passing to the limit while $R$ goes to the infinity, setting $u(\xvec)=\sup_R u_R(\xvec)$
we construct this way a solution $u$ to the original problem.
This solution is symmetric with respect to $\theta=0$.
Moreover,
$u$ is the {\it minimal} solution, in the sense that any (non-negative) {local variational} solution $\varphi$ to \eqref{pde} satisfies $u\leq \varphi$;
(\footnote{ This is also true for very weak solutions, as a consequence of Proposition~\ref{local-rgty-gene} below.})
therefore, $u$ is unique. Similarly, any non-negative supersolution $\overline{\varphi}$ {of local variational regularity} satisfies $u\leq \overline{\varphi}$.
{F}rom~\eqref{bnd:phi*}, we deduce that there exists $C\ge0$ such that
\begin{equation}
u(\xvec) \le C + 4 \log (1+|\xvec|),\quad \forall \xvec \in \Omega.
\label{bnd:u}
\end{equation}


\begin{rem}\label{unbounded}
We can compute the minimal solution {in the half-plane~$\Omega[\pi/2]$, which reduces to a}  1D problem on the half-line, that is $2\log(1+\frac{x_1}{\sqrt 2})$. Since the minimal
solution $u_{[\theta_0]}$ in $\Omega[\theta_0]$ is a supersolution {if $\theta_0 \ge \pi/2$}, then $u_{[\theta_0]}$ cannot be bounded by above in that case.
\end{rem}

\subsection{Regularity}\label{ssc:rgty}

We introduce the following function spaces on a domain~$\mathcal{O}$:
\begin{eqnarray*}
\Phi_p(\mathcal{O}) &:=& \left\{ w\in H^1_0(\mathcal{O}) : \Delta w \in L^p(\mathcal{O}) \right\}, \\
N(\mathcal{O}) &:=& \text{ orthogonal of } \Delta\left[ H^2(\mathcal{O}) \cap H^1_0(\mathcal{O}) \right] \text{ within } L^2(\mathcal{O}) \\
 &=& \left\{ p\in L^2(\mathcal{O}) : \Delta p = 0 \text{ in } \mathcal{O} \text{ and } p=0 \text{ on each side of } \partial\mathcal{O} \right\}.
\end{eqnarray*}
In the first two lines, $\mathcal{O}$ is a bounded Lipschitz domain. The second characterisation of~$N(\mathcal{O})$, proved in~\cite{Gris92} for polygons, can be extended to curvilinear polygons%
\footnote{A \emph{curvilinear polygon} is an open set $\mathcal{O}\subset\R^2$ such that for any $\xvec_0 \in \overline{\mathcal{O}}$ and $\eta$ sufficiently small,  $\mathcal{O} \cap B(\xvec_0,\eta)$ is $C^2$-diffeomorphic, either to~$\R^2$, or to a sector~$\Omega[\theta_{\xvec_0}]$, with $0 < \theta_{\xvec_0} < \pi$. This definition includes both smooth domains and straight polygons, but excludes cusps and cracks.}
such as~$\Omega_R$, {whose boundary is composed of smooth sides that meet at corners}.
To express the regularity of solutions and give asymptotic expansions, we shall generally make use of the parameter:
$$\alpha = \pi/(2\theta_0).$$
We introduce the well-known primal and dual harmonic singularities in~$\Omega$:
\begin{equation}
S(r,\theta) := r^{\alpha}\, \cos(\alpha\theta),\quad
S^*(r,\theta) := \tfrac1{\pi}\, r^{-\alpha}\, \cos(\alpha\theta),
\end{equation}
From~\cite{Gris92}, we know the following facts.
\begin{pro}
\label{pro:grisvard}
Let $\chi$ be a fixed cutoff function equal to~$1$ for $|\xvec| \le 1$ and $0$~for $|\xvec| \ge 2$; for any  $B>0$ we write $\chi_B(r) := \chi(r/B)$.
\begin{itemize}
\item If the angle at the tip of the sector is salient or flat, then:
$$N(\Omega_R) = \{0\},\quad \Phi_2(\Omega_R) = H^2\cap H^1_0(\Omega_R),\quad \Phi_p(\Omega_R)\subset W^{2,p}(\Omega_R),$$
for some $p>2$.
\item If the sector is reentrant, the space $N(\Omega_R)$ is one-dimensional, spanned by
$$P_s^R(r,\theta) =  \chi_{R/2}(r)\,S^*(r,\theta) + \tilde P^R , \quad\text{with}\quad \tilde P^R \in H^1_0(\Omega_R).$$
The spaces $\Phi_p(\Omega_R)$ for $p\ge2$ are embedded into $u\in H^s(\Omega_R) \subset C(\overline{\Omega_R})$, for all $s < 1 + \alpha$; any $w\in\Phi_2$ admits the regular-singular decomposition:
$$w = \lambda[w]\, \chi_{R/2}\, S + \tilde w,\quad \text{with}\quad \tilde w\in H^2\cap H^1_0(\Omega_R),$$
where $\lambda[w]$ is called the \emph{singularity coefficient} of~$w$.
\end{itemize}
\end{pro}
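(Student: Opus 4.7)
The plan is to invoke Grisvard's corner-regularity theory for the curvilinear polygon $\Omega_R$, whose boundary consists of two straight radial sides meeting at the tip with opening $2\theta_0$, closed off by the circular arc $r=R$; the three corners are the tip and the two right-angle junctions at the arc. I would localize via a smooth partition of unity and treat each region separately: interior and smooth-boundary regularity are classical, while near each corner a $C^2$-diffeomorphism straightens the local boundary into the sides of a plane sector, converting $-\Delta$ into an elliptic operator whose principal part is the flat Laplacian, with smooth lower-order terms that do not affect the leading singular exponents.

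The Grisvard singular exponents at an opening angle $\omega$ are $k\pi/\omega$, $k\ge 1$. At each right-angle corner they equal $2k\ge 2$, so no sub-$H^2$ singularity appears there. At the tip they are $k\alpha$ with $\alpha=\pi/(2\theta_0)$. If $\theta_0\le\pi/2$ then $k\alpha\ge 1$ for every $k$, so Grisvard's theorem asserts that $-\Delta$ is an isomorphism from $H^2\cap H^1_0(\Omega_R)$ onto $L^2(\Omega_R)$; this gives $\Phi_2(\Omega_R)=H^2\cap H^1_0(\Omega_R)$ and, by duality, $N(\Omega_R)=\{0\}$, while a Meyers-type improvement of the integrability exponent furnishes $\Phi_p\subset W^{2,p}$ for some $p>2$. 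If $\theta_0>\pi/2$, only the single exponent $\alpha\in(0,1)$ is sub-$H^2$, producing the primal singularity $S(r,\theta)=r^\alpha\cos(\alpha\theta)$. After multiplication by the cutoff $\chi_{R/2}$ (chosen to vanish before the arc) it lies in $H^1_0(\Omega_R)$, and Grisvard's decomposition theorem gives, for every $w\in\Phi_2$, a splitting $w=\lambda[w]\,\chi_{R/2}\,S+\tilde w$ with $\tilde w\in H^2\cap H^1_0(\Omega_R)$. Dually, $N(\Omega_R)$ is one-dimensional, spanned by a function $P_s^R$ whose leading behaviour at the tip is the dual singularity $S^*$, the correction $\tilde P^R\in H^1_0(\Omega_R)$ being chosen precisely so that $P_s^R$ is harmonic in $\Omega_R$ and vanishes on each smooth side. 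The embedding $\Phi_p\subset H^s$ for every $s<1+\alpha$, and hence into $C(\overline{\Omega_R})$, follows from Sobolev embedding applied to the regular part $\tilde w$ together with direct inspection of $\chi_{R/2}\,S$, which is itself of class $H^s$ for every such~$s$.

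The one substantive obstacle is the passage from straight to curvilinear polygons, not covered verbatim by \cite{Gris92}. I would justify it precisely as above: the smooth perturbation of $-\Delta$ generated by the straightening $C^2$-diffeomorphism produces a compact perturbation between the relevant weighted Sobolev spaces, hence preserves the Fredholm index and the dimension of~$N$, while its lower-order terms can be absorbed into the regular part of any decomposition without altering the leading singular behaviour at the tip. The hypothesis in the footnote that $\Omega_R$ is a $C^2$-curvilinear polygon — so that each corner is locally $C^2$-diffeomorphic to a plane sector of the appropriate angle — is exactly what is needed to make this localization argument go through.
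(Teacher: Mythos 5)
Your proposal is correct and takes essentially the same route as the paper: the paper's ``proof'' is simply a citation to Grisvard's corner-singularity theory (``From~\cite{Gris92}, we know the following facts''), together with a footnote asserting that the characterisation of $N(\mathcal{O})$ extends from straight to curvilinear polygons, and your argument is precisely the unpacking of that citation. In particular your identification of the three corners of $\Omega_R$ (the tip with opening $2\theta_0$ and the two right-angle junctions at the arc, whose Dirichlet exponents $2k\ge2$ contribute no sub-$H^2$ singularity) and your diffeomorphism-plus-compact-perturbation justification of the curvilinear extension are exactly the content the authors leave implicit.
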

\begin{rmk}
\label{rmk:psr}

Actually, $P_s^R$ can be computed exactly in~$\Omega_R$:
\begin{equation}
\label{psr:Omega_R}
P_s^R(r,\theta) = \frac1{\pi} \, \left(r^{-\alpha}- \left(\frac{r}{R^2}\right)^\alpha \right)\, \cos(\alpha\theta).
\end{equation}
\end{rmk}

\medbreak

As a first step, we prove that the minimal solution is a local variational solution.

\begin{pro}
\label{local-rgty}
Let $R$ be an arbitrary positive number. The minimal solution $u$ has the following regularity in~$\Omega_R$.
\begin{enumerate}
\item If the sector is salient of flat ($\theta_0 \le \pi/2$), $u\in C^1(\overline{\Omega_R})$.
\item It the sector is reentrant ($\theta_0 > \pi/2$), $u\in H^s(\Omega_R) \subset C(\overline{\Omega_R})$, for all $s < 1 + \alpha$; and it admits the expansion $u(r,\theta) = \Lambda\, S(r,\theta) + \tilde u(r,\theta)$, with $\tilde u \in H^2(\Omega_R)$ and $\Lambda>0$.
\end{enumerate}
\end{pro}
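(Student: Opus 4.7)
The plan is to combine the elliptic theory on corner domains (Proposition~\ref{pro:grisvard}) with a limiting procedure on the sequence $(u_R)_R$ constructed in Lemma~\ref{truncatedproblem}. First I would establish that $u$ is locally of class~$H^1$. Since $u \le \varphi_*$, which is bounded on any bounded subset of $\Omega$ by~\eqref{bnd:u}, one has $u \in L^\infty(\Omega_{R_0})$ and hence $\E^{-u} \in L^\infty(\Omega_{R_0})$ for every $R_0 > 0$. Testing the equation $-\Delta u_R = \E^{-u_R}$ against $\xi^2\,u_R$, with $\xi$ a smooth cutoff equal to~$1$ on~$\Omega_{R_0}$ and supported in~$\Omega_{R_0 + 1}$, yields a Caccioppoli-type bound $\|u_R\|_{H^1(\Omega_{R_0})} \le C(R_0)$, uniform for $R \ge R_0 + 1$. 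Combining weak compactness with the monotone convergence $u_R \uparrow u$, I would deduce $u \in H^1(\Omega_{R_0})$, with vanishing trace on the straight part of $\partial\Omega_{R_0}$.

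Next, for fixed $R > 0$, the key move is to split $u = v + h$ on $\Omega_R$, where $v \in H^1_0(\Omega_R)$ is the unique variational solution of $-\Delta v = \E^{-u}$ with homogeneous boundary data, and $h := u - v \in H^1(\Omega_R)$ is harmonic with $h = 0$ on the straight sides and $h = u$ on the arc $r = R$. For~$v$, Proposition~\ref{pro:grisvard} applies directly and yields either $v \in W^{2,p}(\Omega_R) \subset C^1(\overline{\Omega_R})$ (salient or flat case), or $v = \lambda[v]\,\chi_{R/2}\,S + \tilde v$ with $\tilde v \in H^2 \cap H^1_0(\Omega_R)$ (reentrant case). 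For~$h$, interior and Schauder regularity give smoothness in $\overline{\Omega_R} \setminus \{0\}$, while separation of variables near the origin, together with the Dirichlet condition $\cos((2n+1)\alpha\,\theta_0)=0$, produces the harmonic expansion $h(r,\theta) = \sum_{n \ge 0} b_n \, r^{(2n+1)\alpha} \cos((2n+1)\alpha\,\theta)$.

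I would then assemble the regularity claims. When $\alpha \ge 1$ (salient or flat), every term of the expansion of~$h$ is $C^1$ at the origin, so $h \in C^1(\overline{\Omega_R})$ and $u = v + h \in C^1(\overline{\Omega_R})$. When $\alpha < 1$ (reentrant), the only singular contribution of~$h$ at the origin is the $n=0$ term $b_0\,S$; absorbing it with the singular part of~$v$ and using that $(\chi_{R/2}-1)\,S$ is smooth on~$\Omega_R$, one obtains $u = \Lambda\,S + \tilde u$ with $\Lambda = \lambda[v] + b_0$ and $\tilde u \in H^2(\Omega_R)$. Since $S \in H^s(\Omega_R)$ for $s < 1 + \alpha$, this also gives $u \in H^s(\Omega_R) \subset C(\overline{\Omega_R})$.

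The hard part will be the positivity of~$\Lambda$. I would apply Green's formula with $u$ and~$P_s^R$ on $\Omega_R \setminus B_\epsilon(0)$, then let $\epsilon \to 0$. The inner boundary integral extracts~$\Lambda$ through a standard residue computation (using the decomposition $u = \Lambda S + \tilde u$ from the previous step); the straight-side contribution vanishes because both~$u$ and~$P_s^R$ are zero there; and the arc contribution reduces to a non-negative boundary term, thanks to the explicit form of $P_s^R$ from Remark~\ref{rmk:psr} and the non-negativity of~$u$ on the arc. The resulting identity expresses~$\Lambda$ as the sum of a strictly positive interior integral involving $\E^{-u}\,P_s^R$ (with $\E^{-u} > 0$ and $P_s^R > 0$ in~$\Omega_R$) and a non-negative boundary term, so $\Lambda > 0$.
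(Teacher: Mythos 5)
Your proposal is correct, but it takes a genuinely different route from the paper at each of the three steps. To get $u\in H^1_{\mathrm{loc}}$, you run a Caccioppoli estimate on the truncated solutions $u_R$ and pass to the monotone limit; the paper instead writes $\chi_B u = v - P$ with $v\in H^1_0(\Omega_{2B})$ and $P\in N(\Omega_{2B})$, and kills $P$ by observing that $P\in L^p$ for all $p<\infty$, which is incompatible with the $r^{-\alpha}$ singularity of $P_s^R$. For the regular--singular decomposition, you split $u = v + h$ with $v$ the $H^1_0(\Omega_R)$ solution of $-\Delta v = \E^{-u}$ and $h$ harmonic, then expand $h$ by separation of variables; the paper applies Proposition~\ref{pro:grisvard} to $\chi_{B/2}\,u\in\Phi_2(\Omega_B)$ directly. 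Your route works, but you must check that the harmonic tail $h - b_0 S$ lies in $H^2$ up to the corner (this follows from the exponential decay of the Fourier coefficients, since every subsequent exponent $(2n+1)\alpha\ge 3\alpha>3/2$, together with smoothness of $h$ away from the origin), and unless you invoke the $\theta$-symmetry of $u$ you should also account for the $\sin$-modes $r^{2n\alpha}\sin(2n\alpha\theta)$, which are again $H^2$ because $\alpha>1/2$. Finally, for $\Lambda>0$ your Green identity on $\Omega_R\setminus B_\epsilon$ yields $\Lambda = \int_{\Omega_R}P_s^R\,\E^{-u} - \int_{r=R} u\,\partial_r P_s^R$, both terms non-negative and the first strictly positive; the paper instead quotes Grisvard's representation $\Lambda_R = \int_{\Omega_R}\E^{-u_R}P_s^R>0$ for $u_R\in H^1_0(\Omega_R)$ and uses $u\ge u_R\Rightarrow\Lambda\ge\Lambda_R$. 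Your identity is a little more explicit (it gives a lower bound for $\Lambda$ in terms of $u$ itself), but to justify $\epsilon\to0$ on the inner arc you should note that the contribution of the regular part $\tilde u$ is $O(\epsilon^{\gamma-\alpha})$ with $\gamma<1$ arbitrary (from $H^2\subset C^{0,\gamma}$ and $\tilde u(0)=0$), which vanishes since $\alpha<1$. Both routes are valid; the paper's is shorter once Grisvard's machinery is assumed, yours is more self-contained and explicit.
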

\begin{proof}
It follows from~\eqref{bnd:u} that $u$ is bounded on~$\Omega_R$ for any~$R$. Fix $B>0$, and introduce $v \in H^1_0(\Omega_{2B})$ which solves
\begin{equation}
\Delta v = f := \Delta(\chi_B\,u) = -\chi_B\,\E^{-u} + 2\, \nabla\chi_B \cdot \nabla u + (\Delta\chi_B)u \in H^{-1}(\Omega_{2B}),
\label{eq:chi.u}
\end{equation}
and set $P := v - \chi_B\,u$. There holds $P \in L^2(\Omega_{2B}), \ \Delta P = 0$ and $P=0$ on each side of~$\partial\Omega_{2B}$; so $P \in N(\Omega_{2B})$. Therefore, $P=0$ and $\chi_B\, u = v \in H^1_0(\Omega_{2B})$ in the salient case. In a reentrant sector, we have $P = A\,P_s^R$.
{ On the other hand, a Sobolev embedding gives $v \in L^p(\Omega_{2B})$ for all $p<+\infty$; as $\chi_B\, u \in L^\infty(\Omega_{2B})$ by~\eqref{bnd:u}, one has $P \in L^p(\Omega_{2B})$ for all $p<+\infty$, which is only possible if $A=0$, i.e., $\chi_B\, u = v \in H^1_0(\Omega_{2B})$ again.}

\smallbreak

Thus, we have in all cases $u\in H^1(\Omega_B)$; computing as in~\eqref{eq:chi.u}, we find $\Delta(\chi_{B/2}\,u) \in L^2(\Omega_B)$, i.e., $ \chi_{B/2}\,u \in \Phi_2(\Omega_B)$.
Invoking Proposition~\ref{pro:grisvard} again, this leads to:
\begin{itemize}
\item $ \chi_{B/2}\,u \in H^2(\Omega_B) \subset W^{1,p}(\Omega_B)$ for all $p<+\infty$, in the salient case. Bootstrapping again, we deduce $\Delta(\chi_{B/2}\,u) \in L^p(\Omega_B)$, and $ \chi_{B/2}\,u \in \Phi_p(\Omega_B) \subset W^{2,p}(\Omega_B)$ for some $p>2$. In other words, $u\in W^{2,p}(\Omega_R) \subset C^1(\overline{\Omega_R})$ as $B$ is arbitrary.
\item  in the reentrant case, we get the expansion
$$ (\chi_{B/2}\,u)(r,\theta) = \Lambda\, \chi_{B/2}(r)\, S(r,\theta) + \tilde v(r,\theta), \quad\text{with}\quad \tilde v \in H^2 \cap H^1_0(\Omega_B),$$
hence the decomposition of~$u$ as $B$ is arbitrary.
\end{itemize}

\medbreak

To prove $\Lambda>0$, we return to the solution~$u_R$ on~$\Omega_R$. Let $\Lambda_R$ be its singularity coefficient; it controls the dominant behaviour of~$u_R$ near the corner, $u_R \sim \Lambda_R\, S(r,\theta)$ as $r\to0$, uniformly in~$\theta$~\cite{KaLP13}. It is given by the formula~\cite{Gris92}:
$$ \Lambda_R = \int_{\Omega_R} (-\Delta u_R)\,P_s^{R} = \int_{\Omega_R} \E^{-u_R}\,P_s^{R}.$$
{{F}rom~\eqref{psr:Omega_R}, we see that $P_s^R > 0$ in~$\Omega_R$.}%
(\footnote{ This property is probably true for any curvilinear polygon. Anyway, it is easily seen that the dual singular function is strictly positive on some subdomain, which is enough.})
One infers that $\Lambda_R > 0$. Yet, we have seen that $u\ge u_R$, thus $\Lambda \ge \Lambda_R>0$.
\end{proof}

\medbreak

\begin{rem}
\label{rem:LambdaR}
Actually, there holds $\Lambda = \lim_{R\to+\infty} \Lambda_R$. In the fixed domain~$\Omega_{2B}$, the function $v_R := \chi_B\,u_R \in H^1_0(\Omega_{2B})$ solves
$$ \Delta v_R = f_R := -\chi_B\,\E^{-u_R} + 2\, \nabla\chi_B \cdot \nabla u_R + (\Delta\chi_B)u_R \in L^2(\Omega_{2B}).$$
By the monotone convergence theorem, we know that $u_R\to u$ and $\E^{-u_R}\to \E^{-u}$ in~$L^2(\Omega_{2B})$. Hence $f_R \to f$ in~$H^{-1}(\Omega_{2B})$ and $v_R \to v$ in~$H^1_0(\Omega_{2B})$. In particular, $u_R \to u$ in~$H^1(\Omega_B)$, and (as in Proposition~\ref{local-rgty}), $\chi_{B/2}\,u_R \to \chi_{B/2}\,u \in \Phi_2(\Omega_B)$. As the singularity coefficient is a continuous linear form on~$\Phi_2$, one infers $\Lambda_R \to \Lambda$.
\end{rem}

\medbreak

\subsection{Monotonicity}

\noindent We can now prove a monotonicity property.
 Standard results in the literature assert that {\it bounded}
solutions to this type of PDE have some monotonicity properties.
Our minimal solution is not bounded, see Remark \ref{unbounded}.

\begin{pro}\label{increasing}
Let $\boldsymbol{u}[\theta]$ be the vector $(\cos\theta,\sin\theta)$.
For any $\theta$ such that $|\theta|\leq \theta_0$, $t\ge0$ and $\xvec\in\Omega$, there holds $u(\xvec) \le u(\xvec + t\,\boldsymbol{u}[\theta])$.
\end{pro}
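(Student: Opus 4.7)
My plan is to use the minimality of $u$ together with the invariance of the sector $\Omega$ under translations along the vectors $\boldsymbol{u}[\theta]$ with $|\theta| \le \theta_0$. Set $\tilde{u}(\xvec) := u(\xvec + t\boldsymbol{u}[\theta])$. By Lemma~\ref{pro:isom} applied to the translation by $-t\boldsymbol{u}[\theta]$, the function $\tilde{u}$ is a local variational solution of $-\Delta \tilde{u} = \E^{-\tilde{u}}$ on the shifted sector $\Omega_t := \Omega - t\boldsymbol{u}[\theta]$, with zero Dirichlet data on $\partial \Omega_t$; its local variational regularity is inherited from~$u$ via Proposition~\ref{local-rgty} applied on the shifted sector.

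The geometric key is the inclusion $\Omega \subset \Omega_t$, equivalent to $\Omega + t\boldsymbol{u}[\theta] \subset \Omega$. This says that $\boldsymbol{u}[\theta]$ is a recession direction of~$\Omega$. Since $\boldsymbol{u}[\theta] \in \overline{\Omega}$, and $\overline{\Omega}$ is a (convex) cone closed under addition whenever $\theta_0 \le \pi/2$, the ray $\xvec + s\boldsymbol{u}[\theta]$, $s\in[0,t]$, stays in $\Omega$ for every $\xvec \in \Omega$. Granted this, $\tilde{u}$ is well defined on all of $\Omega$, solves the PDE there, and has non-negative trace on $\partial\Omega$ (as $\partial\Omega \subset \overline{\Omega_t}$ and $\tilde{u}\ge0$ on $\Omega_t$). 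Hence $\tilde{u}$ is a non-negative local-variational supersolution of~\eqref{pde} on $\Omega$, and the minimality property recalled in Section~\ref{limitingproblem} (every such supersolution dominates $u$) yields $u \le \tilde{u}$ on~$\Omega$, i.e., $u(\xvec) \le u(\xvec + t\boldsymbol{u}[\theta])$, which is the claim.

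The main obstacle is the recession-cone inclusion $\Omega + t\boldsymbol{u}[\theta] \subset \Omega$ in the reentrant case $\theta_0 > \pi/2$, where $\overline{\Omega}$ is no longer convex and a straight translation from a point close to one boundary ray may leave $\Omega$ through the complementary wedge even when $|\theta| \le \theta_0$. A way around this is to restrict the displacement to those $(\xvec,\theta,t)$ for which $\xvec + t\boldsymbol{u}[\theta] \in \Omega$, perform the minimality comparison only on the open set $\Omega \cap \Omega_t$ where both $u$ and $\tilde{u}$ are available, and then either iterate in small steps or pass to the limit in the approximation $u = \sup_R u_R$ of Section~\ref{limitingproblem}, in which the supersolution argument survives because $u_R \le \tilde{u}$ on every fixed compact sub-sector of $\Omega \cap \Omega_t$ before taking $R \to +\infty$.
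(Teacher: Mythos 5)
Your first two paragraphs reproduce the paper's argument: translate by $-t\boldsymbol{u}[\theta]$, use Lemma~\ref{pro:isom} to see that $\tilde u$ is a non-negative solution on~$\Omega_t$, observe that the inclusion $\Omega\subset\Omega_t$ turns $\tilde u$ into a supersolution on~$\Omega$, and conclude by minimality of~$u$. The obstruction you raise in the third paragraph is genuine, and it is in fact also present in the paper's own one-line proof, which asserts $\Omega\subset\mathcal{T}^{-1}(\Omega)$ without qualification. Since $\Omega+t\boldsymbol{u}[\theta]\subset\Omega$ is equivalent to $-\boldsymbol{u}[\theta]$ lying in the complementary cone $\R^2\setminus\Omega$ (a closed convex cone when $\theta_0>\pi/2$), the inclusion holds exactly when $|\theta|\le\min(\theta_0,\pi-\theta_0)$ and fails for $\pi-\theta_0<|\theta|\le\theta_0$.

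Your proposed repair, however, does not close this gap. Restricting the comparison to $\Omega\cap\Omega_t$ fails on the boundary piece $\partial\Omega_t\cap\Omega$, where $\tilde u=0$ while $u>0$, so neither the comparison principle on $\Omega\cap\Omega_t$ nor the minimality of~$u$ (which requires a supersolution defined on all of~$\Omega$) applies; extending $\tilde u$ by~$0$ on $\Omega\setminus\Omega_t$ does not yield a supersolution since $-\Delta 0=0<\E^{0}$; and iterating in small steps reproduces the same mismatch at every step. Indeed, for $\pi-\theta_0<|\theta|\le\theta_0$ the segment $[\xvec,\xvec+t\boldsymbol{u}[\theta]]$ may leave $\Omega$, traverse the excluded wedge, and re-enter arbitrarily close to the far side of $\partial\Omega$ where $u$ is small, so the stated inequality can genuinely fail. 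The proposition should therefore be read with the extra hypothesis $|\theta|\le\min(\theta_0,\pi-\theta_0)$, i.e. that $\boldsymbol{u}[\theta]$ is a recession direction of~$\Omega$, under which your (and the paper's) argument is complete. This suffices for the subsequent remark: $\partial_{x_1}u\ge0$ uses $\theta=0$, and $\xvec\cdot\nabla u\ge0$ can instead be obtained directly by scaling, since for $\lambda\ge1$ the function $u(\lambda\,\cdot)$ is a non-negative supersolution on $\Omega=\lambda\Omega$ because $-\Delta[u(\lambda\,\cdot)]=\lambda^2\E^{-u(\lambda\,\cdot)}\ge\E^{-u(\lambda\,\cdot)}$, whence $u(\lambda\xvec)\ge u(\xvec)$.
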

\begin{rem}
This implies that $\xvec \cdot \nabla u(\xvec
)\geq 0$ and $\frac{\partial u}{\partial x_1}\geq 0$.
\end{rem}
\begin{proof}
We use Proposition~\ref{pro:isom} with $\mathcal{T}$ being the translation $\mathcal{T}\xvec = \xvec + t\,\boldsymbol{u}[\theta]$.
The function $u(\mathcal{T}\xvec)$ is a super-solution in $\Omega$, as $\Omega\subset \mathcal{T}^{-1}(\Omega)$,
so $u(\mathcal{T}\xvec)\geq u(\xvec)$ and the result follows promptly.
\end{proof}

\begin{lem}\label{derivative}
The minimal solution satisfies
\begin{equation}\label{upperderivative}
r\, \partial_r u(r,\theta) = \xvec\cdot\nabla u(\xvec)\leq 2.
\end{equation}
\end{lem}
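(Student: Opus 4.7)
I would exploit the scaling invariance shared by the equation and by the sector. For $\lambda > 0$, define
\[v_\lambda(\xvec) := u(\lambda\xvec) - 2\log\lambda.\]
A one-line computation using $-\Delta_{\xvec}[u(\lambda\xvec)] = \lambda^2 e^{-u(\lambda\xvec)}$ gives $-\Delta v_\lambda = \E^{-v_\lambda}$ in~$\Omega$, with $v_\lambda = -2\log\lambda$ on~$\partial\Omega$. The key reduction is that, once we prove $v_\lambda \le u$ in~$\Omega$ for every $\lambda \ge 1$, dividing the resulting inequality $u(\lambda\xvec) - u(\xvec) \le 2\log\lambda$ by $\lambda - 1 > 0$ and letting $\lambda \to 1^+$ yields $\xvec \cdot \nabla u(\xvec) \le 2$.

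To prove $v_\lambda \le u$, I would work on the truncated problem from Section~\ref{limitingproblem}. Set $u_R^\lambda(\xvec) := u_R(\lambda\xvec) - 2\log\lambda$; this is a solution of $-\Delta w = \E^{-w}$ on the rescaled sector~$\Omega_{R/\lambda}$. Its boundary values are dominated by those of~$u_R$: on the two radial sides, $u_R^\lambda = -2\log\lambda \le 0 = u_R$; on the arc $r = R/\lambda$, we still have $u_R^\lambda = -2\log\lambda \le 0$ while $u_R \ge 0$ there by the positivity in Lemma~\ref{truncatedproblem}. A standard weak comparison---test the equation for $\phi := u_R^\lambda - u_R$ against $\phi^+ \in H^1_0(\Omega_{R/\lambda})$ and use the monotonicity of $s \mapsto \E^{-s}$---gives $\int|\nabla\phi^+|^2 \le 0$, hence $u_R^\lambda \le u_R$ on $\Omega_{R/\lambda}$. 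Letting $R \to +\infty$ and using $u_R \nearrow u$ pointwise yields $v_\lambda \le u$ on~$\Omega$.

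The final differentiation step is routine. Interior elliptic regularity for $-\Delta u = \E^{-u}$, whose right-hand side is locally bounded by~\eqref{bnd:u}, gives $u \in C^\infty(\Omega \setminus \{0\})$, so the difference quotient $(\lambda-1)^{-1}[u(\lambda\xvec) - u(\xvec)]$ converges to $\xvec \cdot \nabla u(\xvec)$ for every $\xvec \ne 0$. Near the origin the bound is automatic: in the salient case $u \in C^1$ there by Proposition~\ref{local-rgty}, while in the reentrant case the singular expansion $u = \Lambda S + \tilde u$ gives $r\,\partial_r u \sim \alpha \Lambda r^\alpha \to 0$ as $r \to 0$. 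The main delicate point is the boundary bookkeeping in the comparison on~$\Omega_{R/\lambda}$---in particular the outer-arc estimate, which crucially uses $u_R \ge 0$; once this is checked, the rest reduces to standard arguments.
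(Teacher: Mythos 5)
Your proof is correct, and the central idea---exploiting the scaling invariance $u(\lambda\xvec) - 2\log\lambda$ of the equation---is exactly the one the paper uses; the difference is a matter of bookkeeping. The paper takes $\lambda \in (0,1)$, so that $u(\lambda\xvec) - 2\log\lambda$ is a \emph{non-negative supersolution} (the boundary value is $-2\log\lambda > 0$, and the PDE scales exactly), and the inequality $u(\xvec) \le u(\lambda\xvec) - 2\log\lambda$ then follows in one line from the minimality of $u$ among non-negative supersolutions of local variational regularity, which was already established in \S\ref{limitingproblem}. You instead take $\lambda > 1$, which makes $v_\lambda$ a \emph{subsolution} (boundary value $-2\log\lambda < 0$), so you cannot invoke minimality directly; your workaround---descend to the truncated problem, compare $u_R(\lambda\xvec) - 2\log\lambda$ against $u_R$ by testing against $(u_R^\lambda - u_R)_+$, and pass to the limit $R \to \infty$---is perfectly valid, but it re-proves a special case of the comparison principle that the paper already has on record. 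Up to the substitution $\lambda \mapsto 1/\lambda$, $\xvec \mapsto \lambda\xvec$, the two inequalities are identical. Where you genuinely add something is at the end: you explicitly justify the passage from the finite-difference inequality to $\xvec\cdot\nabla u \le 2$, handling the interior by elliptic regularity and the tip of the sector by the $C^1$ estimate (salient case) or the singular expansion $u = \Lambda S + \tilde u$ (reentrant case) from Proposition~\ref{local-rgty}; the paper leaves this routine step tacit.
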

\begin{proof}
We observe that for any $\lambda\in (0,1)$, $u(\lambda\xvec)-2\log \lambda$ is a supersolution to the equation \eqref{pde}. Thus:
\begin{equation}\label{21}
u(\xvec)\leq u(\lambda \xvec)-2\log \lambda.
\end{equation}
Dividing the equation \eqref{21} by $1-\lambda$ and letting $\lambda\rightarrow 1$ yields the result.
\end{proof}
\begin{rem}\label{upperbound}
Setting $\lambda = |\xvec|^{-1}$ in~\eqref{21}, we find a slightly improved version of~\eqref{bnd:u}:
\begin{equation}\label{upper}
u(\xvec)\leq \sup_{|\boldsymbol{y}| = 1} u(\boldsymbol{y}) + 2\log |\xvec| = \sup_{|\boldsymbol{y}|\leq 1} u(\boldsymbol{y}) + 2\log |\xvec|,\quad
\forall |\xvec| \ge1.
\end{equation}
The equality of the two upper bounds follows from Proposition~\ref{increasing}.
\end{rem}

\subsection{Symmetry}\label{ssc:sym}
\begin{pro}
Consider $u_R$ the solution of \eqref{pdeR}. Then $\theta \mapsto u_R(r,\theta)$
achieves its maximum at $\theta=0$.
\end{pro}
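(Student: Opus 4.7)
The plan is a reflection / moving-plane style argument with respect to the radial line $\theta = \mu$. The first step is to reduce the statement to a monotonicity property: since $u_R(r,\theta) = u_R(r,-\theta)$ by Lemma~\ref{truncatedproblem}, it suffices to prove that $\theta \mapsto u_R(r,\theta)$ is non-increasing on $[0,\theta_0]$ for every fixed $r$.

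Fix $\mu \in (0,\theta_0)$ and consider the curvilinear triangle
$$ D_\mu := \{\xvec(r,\theta) : 0 < r < R,\ \mu < \theta < \theta_0\} $$
together with the reflection $\sigma_\mu : (r,\theta) \mapsto (r, 2\mu - \theta)$ across the ray $\{\theta = \mu\}$. Because $2\mu - \theta_0 \ge -\theta_0$, one has $\sigma_\mu(D_\mu) \subset \Omega_R$; by Lemma~\ref{pro:isom} the function $v := u_R \circ \sigma_\mu$ satisfies $-\Delta v = \E^{-v}$ on~$D_\mu$. Setting $w_\mu := u_R - v \in H^1(D_\mu)$ and applying the mean value theorem,
$$ -\Delta w_\mu + c(r,\theta)\, w_\mu = 0 \quad \text{in } D_\mu, \qquad c(r,\theta) := \int_0^1 \E^{-(t\, u_R + (1-t)v)}\, dt > 0.$$
On the boundary of $D_\mu$, $w_\mu \equiv 0$ on $\{\theta = \mu\}$ (tautologically) and on $\{r = R\}$ (both terms vanish by the Dirichlet condition), while $w_\mu = -v \le 0$ on $\{\theta = \theta_0\}$ since $u_R = 0$ there and $v = u_R\circ\sigma_\mu \ge 0$. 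Hence $w_\mu^+$ vanishes on all one-dimensional pieces of $\partial D_\mu$, so $w_\mu^+ \in H^1_0(D_\mu)$; testing the equation against it yields
$$ \int_{D_\mu} |\nabla w_\mu^+|^2 + c\, (w_\mu^+)^2 = 0,$$
whence $w_\mu^+ \equiv 0$ and thus $u_R(r,\theta) \le u_R(r, 2\mu - \theta)$ throughout $D_\mu$.

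To conclude, given any $0 \le \theta' < \theta \le \theta_0$, I apply the previous step with $\mu = (\theta+\theta')/2 \in (0,\theta_0)$ to obtain $u_R(r,\theta) \le u_R(r,\theta')$. This establishes the desired monotonicity and, in particular, that the maximum in $\theta$ is attained at $\theta=0$. The argument is essentially bookkeeping; the only slightly delicate point is the behaviour at the corner $r = 0$, but since that corner is a single point it does not affect the trace of $w_\mu^+$ on the sides of $\partial D_\mu$, so I anticipate no serious obstacle beyond carefully setting up the reflected domain and verifying the sign on each piece of the boundary.
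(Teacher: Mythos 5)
Your proof is correct, and it takes a genuinely different route from the paper's. The paper proceeds by differentiating the equation: setting $v:=\partial_\theta u_R$, one obtains $-\Delta v + \E^{-u_R}v=0$ on the half-sector $\Omega_R^+=\{0<\theta<\theta_0,\ r<R\}$, with $v=0$ on $\{\theta=0\}$ (by symmetry) and on $\{r=R\}$, and $v\le0$ on $\{\theta=\theta_0\}$; the maximum principle then gives $v<0$ in the interior. You instead use a reflection (``rotating plane'') argument across the ray $\theta=\mu$, which is standard Berestycki--Caffarelli--Nirenberg machinery, and your bookkeeping of the boundary signs and the linearization via the mean value theorem are all correct; the corner at $r=0$ is indeed harmless for the trace of $w_\mu^+$ since a point has zero $H^1$-capacity in the plane.

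Two remarks on the trade-offs. Your approach avoids the regularity subtlety that the paper has to address in a footnote (justifying that $\partial_\theta u_R\in H^1(\Omega_R^+)$, which requires the regular--singular decomposition in the reentrant case); you only need $u_R\in H^1\cap L^\infty$, which is already known. On the other hand, the paper's argument, via the strong maximum principle, yields the \emph{strict} inequality $\partial_\theta u_R<0$ in $\Omega_R^+$, whereas your energy argument as written gives only $u_R(r,\theta)\le u_R(r,\theta')$ for $0\le\theta'<\theta\le\theta_0$. The non-strict version is all the proposition asks for, so this is not a gap, but you could upgrade to strict monotonicity by applying the strong maximum principle to $w_\mu\le0$ (it is a supersolution of $-\Delta+c$ and vanishes on the ray $\theta=\mu$, so either $w_\mu\equiv0$ in $D_\mu$ or $w_\mu<0$ in the interior, and the first alternative is excluded by the sign on $\{\theta=\theta_0\}$). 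One very minor point of presentation: Lemma~\ref{pro:isom} is stated for the full sector $\Omega$, not for $\Omega_R$, so it is cleaner to invoke directly the invariance of the Laplacian under isometries rather than the lemma itself; the content is of course the same.
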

\begin{cor}
The minimal solution $u$ of \eqref{pde} satisfies that $\theta \mapsto u(r,\theta)$
achieves its maximum at $\theta=0$.
\end{cor}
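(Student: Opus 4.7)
The plan is to pass the monotonicity property of the truncated problem to the limit. By the preceding Proposition, for each $R>0$ and each $(r,\theta)$ with $r<R$ and $|\theta|\le \theta_0$, we have
\begin{equation*}
u_R(r,\theta) \le u_R(r,0).
\end{equation*}
Recall from Subsection~\ref{limitingproblem} that $u(\xvec) = \sup_R u_R(\xvec)$, with the supremum being a monotone (increasing) pointwise limit as $R\to\infty$.

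Fix an arbitrary $r>0$ and $\theta$ with $|\theta|\le\theta_0$. For every $R>r$ the inequality above applies at both points $(r,\theta)$ and $(r,0)$, since both lie inside $\Omega_R$. Taking the supremum over $R>r$ on the right-hand side gives $u_R(r,\theta)\le u(r,0)$ for every such $R$, and then taking the supremum on the left yields
\begin{equation*}
u(r,\theta) \;=\; \sup_{R>r} u_R(r,\theta) \;\le\; u(r,0).
\end{equation*}
This is exactly the statement that $\theta\mapsto u(r,\theta)$ attains its maximum at $\theta=0$.

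There is essentially no obstacle: the argument is a one-line passage to the supremum, which is legitimate because non-strict inequalities are preserved under pointwise suprema. The only subtle point one might wish to underline is that one must work with $R$ large enough so that both $(r,\theta)$ and $(r,0)$ lie in $\Omega_R$, but this is automatic for $R>r$. No regularity or convergence in function spaces is required for the corollary itself; the symmetry in $\theta\mapsto-\theta$ already established for $u$ in Subsection~\ref{limitingproblem} is consistent with, but not needed for, this argument.
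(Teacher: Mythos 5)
Your proof is correct and matches the paper's approach exactly: the paper proves the Proposition for $u_R$ and then disposes of the Corollary with ``the results follow promptly,'' which is precisely the passage to the limit $R\to\infty$ you carry out. You have simply made explicit what the paper leaves implicit, correctly using the monotone increasing convergence $u_R \uparrow u$ from Subsection~\ref{limitingproblem} and the stability of non-strict inequalities under pointwise suprema.
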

\begin{proof}
Consider the half-domain $\Omega_R^+ := [ 0<\theta<\theta_0 \text{ and } r<R]$.
The function $v:=\partial_\theta u_R$ is solution to
\begin{equation}
-\Delta v+\E^{-u_R}v=0,
\end{equation}
and $v\leq 0$ on the boundary. Actually, $v=0$ on the lower ray $[\theta=0]$ and on the arc of circle $[r=R]$, while $v\leq 0$ on the upper ray $[\theta=\theta_0]$. By the maximum principle,%
\footnote{To check that $v\in H^1(\Omega_R^+)$: if the sector $\Omega$ is salient or flat ($\theta_0 \le \frac\pi2$), write: {$v = \xvec^\perp\cdot\nabla u_R$, with $\xvec^\perp := (-x_2,x_1)$.
As $u_R \in H^2(\Omega_R)$ in this case, one deduces $v \in H^1(\Omega_R)$.}
If the sector is reentrant ($\theta_0 > \frac\pi2$), write $u_R = \tilde u_R + \Lambda_R\,S$, with $\tilde u_R \in H^2(\Omega_R)$. Then:
$$ v = \partial_\theta \tilde u_R + \Lambda_R\,\partial_\theta S = {\xvec^\perp\cdot\nabla\tilde u_R} + \Lambda_R\,(-\alpha\,r^\alpha\,\sin(\alpha\theta))$$
The second term belongs to~$H^s(\Omega_R)$ for $s<1+\alpha$, and the first is treated as above.}
$v<0$ in~$\Omega_R^+$. The results follows promptly.
\end{proof}

\begin{rem}
We reckon that the map $\theta \mapsto u(r,\theta)$ is concave, but we do
not have a proof of this fact.
\end{rem}

\section{General properties of solutions}

\subsection{Non-uniqueness}
For any $\mu = (\mu_-,\mu_+) {\in (\R^+)^2}$, the function $H_\mu := \mu_-\,S^* + \mu_+\,S$ is non-negative and harmonic in~$\Omega$, and vanishes on the boundary: it is a subsolution to~\eqref{pde}. We shall construct a solution in the form $\varphi_\mu = H_\mu + v^\mu$. If such a solution exists, then $v^\mu$ solves
\begin{equation}
-\Delta v^\mu = \E^{-H_\mu} \, \E^{-v^\mu} \quad\text{in } \Omega,\quad
v^\mu = 0 \quad\text{on } \partial\Omega.
\label{vmu}
\end{equation}
The corresponding problem in the truncated domain
\begin{equation*}
-\Delta v^\mu_R = \E^{-H_\mu} \, \E^{-v^\mu_R} \quad\text{in } \Omega_R,\quad
v^\mu_R = 0 \quad\text{on } \partial\Omega_R,
\end{equation*}
is well-posed, and $v^\mu_{R'}$ is a supersolution when $R'>R$. The minimal solution $u$ to~\eqref{pde} also is a supersolution, so $v^\mu_R \le v^\mu_{R'} \le u$, and passing to the limit we obtain a solution to~\eqref{vmu}. Thus we have constructed a solution $\varphi_\mu = H_\mu + v^\mu$ to~\eqref{pde}, which is bounded as
$$ \mu_-\,S^* + \mu_+\,S \le \varphi_\mu \le \mu_-\,S^* + \mu_+\,S + u. $$
Because of~\eqref{bnd:u}, there holds
\begin{eqnarray*}
\text{if } \mu_- >0,&&
\varphi_\mu(r,\theta) \sim\tfrac1\pi\, \mu_-\, r^{-\alpha}\,\cos(\alpha\theta) \quad\text{as } r\to0,\quad \theta \ne\pm\theta_0 \text{ fixed} ; \\
\text{if } \mu_+ >0,&&
\varphi_\mu(r,\theta) \sim  \mu_+\, r^{\alpha}\,\cos(\alpha\theta) \quad\text{as } r\to+\infty,\quad \theta \ne\pm\theta_0 \text{ fixed} .
\end{eqnarray*}
Thus, solutions corresponding to different $\mu$ are distinct. They are local variational solutions if $\mu_- = 0$, and very weak solutions if $\mu_- > 0$
{and the sector is reentrant ($\alpha<1$). In a flat or salient sector ($\alpha\ge1$) the solutions corresponding to $\mu_- > 0$ are not $L^2$ in a neighbourhood of the origin, thus they do not qualify as very weak solutions.}

\begin{rem}
One may wonder if there exists a solution to \eqref{pde} that is not non-negative. Using the
maximum principle in unbounded domains of $\mathbb{R}^2$ (see \cite{BCN97}) we can prove
that any solution of \eqref{pde} that is bounded from below is non-negative.
\end{rem}

\subsection{Local regularity near the tip of the sector}
Using the tools of Proposition~\ref{local-rgty} (localization, bootstrapping, and regularity theory for the linear Poisson--Dirichlet problem), one obtains the following results.
\begin{pro}
\label{local-rgty-gene}
There holds:
\begin{enumerate}
\item Any solution to~\eqref{pde} belongs to $C^\infty(\mathcal{K})$, for any compact subset $\mathcal{K} \subset \overline\Omega$ such that the origin $0 \notin \mathcal{K}$.
\item If $\Omega$ is salient of flat ($\theta_0 \le \pi/2$), any very weak solution $\varphi$ is actually a local variational solution, and all solutions satisfy $\varphi \in W^{2,p}(\Omega_R) \subset C^1(\overline{\Omega_R})$ for some $p>2$ and all finite~$R$.
\item It $\Omega$ is reentrant ($\theta_0 > \pi/2$), then:
\begin{itemize}
\item any local variational solution satisfies
$\varphi \in H^s(\Omega_R) \subset C(\overline{\Omega_R})$, for all $s < 1 + \alpha$, and admits the expansion $\varphi(r,\theta) = \lambda\, S(r,\theta) + \tilde \varphi(r,\theta)$, with $\tilde \varphi \in H^2(\Omega_R)$, and $\lambda \ge \Lambda$
{if $\varphi\ge0$};
\item any very weak solution admits the expansions
\begin{eqnarray}
 \varphi(r,\theta) &=& \lambda_*\,S^*(r,\theta) + \hat\varphi(r,\theta),\quad \hat\varphi \in H^1(\Omega_R)\,;
\\
 &=& \lambda_*\,S^*(r,\theta) + \lambda\, S(r,\theta) + \tilde \varphi(r,\theta),\quad \tilde\varphi \in H^2(\Omega_R).\qquad
\end{eqnarray}
for all finite $R$. The coefficient $\lambda_*$ is non-negative.
\end{itemize}
\end{enumerate}
\end{pro}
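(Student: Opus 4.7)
The plan is to mirror the proof of Proposition~\ref{local-rgty}: localise $\varphi$ by a cutoff, reduce to a linear Poisson--Dirichlet problem on~$\Omega_{2B}$, remove the contribution of the null space $N(\Omega_{2B})$, and bootstrap the regularity of the variational remainder via Proposition~\ref{pro:grisvard}. The only new input compared to the minimal solution is the lower regularity assumed in the very-weak case, but this is compensated by the pointwise bound $\E^{-\varphi}\le 1$ that follows from $\varphi\ge 0$.

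For \emph{(i)}, I would fix a compact $\mathcal{K}\subset\overline\Omega\setminus\{0\}$ and a smooth cutoff $\chi$ equal to $1$ on~$\mathcal{K}$ with support disjoint from the origin. The distributional identity
\begin{equation*}
\Delta(\chi\varphi) = -\chi\,\E^{-\varphi} + 2\,\nabla\cdot(\varphi\,\nabla\chi) - \varphi\,\Delta\chi \in H^{-1}
\end{equation*}
holds in a smooth chart of $\overline\Omega\setminus\{0\}$, so classical Dirichlet regularity first promotes $\varphi$ to~$H^1_{\mathrm{loc}}$. Iterating the identity with $\nabla\varphi$ now well-defined and using the smoothness of $t\mapsto\E^{-t}$, one reaches $C^\infty$ on~$\mathcal{K}$ by repeated bootstrap.

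For \emph{(ii)} and the $H^1$-level step of \emph{(iii)}, I apply the same identity in~$\Omega_{2B}$: $\Delta(\chi_B\varphi)\in H^{-1}(\Omega_{2B})$. Let $v\in H^1_0(\Omega_{2B})$ solve $\Delta v=\Delta(\chi_B\varphi)$; then $P := \chi_B\varphi-v\in L^2(\Omega_{2B})$ is harmonic and has zero trace (in the very-weak sense of~\cite{Gris92}) on each side of~$\partial\Omega_{2B}$, so $P\in N(\Omega_{2B})$. In the salient/flat case $N=\{0\}$, hence $\chi_B\varphi=v\in H^1_0$, which reduces everything to the situation of Proposition~\ref{local-rgty}. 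In the reentrant case $P = \lambda_*\,P_s^{2B}$; restricting to $\Omega_R$ with $R<2B$ and using~\eqref{psr:Omega_R} produces $\varphi=\lambda_*\,S^*+\hat\varphi$ with $\hat\varphi\in H^1(\Omega_R)$. The function $\hat\varphi$ solves $-\Delta\hat\varphi=\E^{-\varphi}\in L^\infty$ with zero Dirichlet data on the sides near the tip, so the variational machinery of Proposition~\ref{local-rgty} applies to~$\hat\varphi$ and yields, via Grisvard's regular--singular decomposition, $\hat\varphi=\lambda\,S+\tilde\varphi$ with $\tilde\varphi\in H^2(\Omega_R)$.

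It remains to handle the signs. For $\lambda\ge\Lambda$ in the local variational case I compare with the minimal solution: $w:=\varphi-u\ge 0$ admits in $\Omega_R$ the expansion $w=(\lambda-\Lambda)\,S+(\tilde\varphi-\tilde u)$ whose regular part lies in $H^2\hookrightarrow C^{0,1-\epsilon}$ and vanishes at the tip; since $\alpha<1$ in a reentrant sector, the regular part is $o(r^\alpha)$, so $r^{-\alpha}w(r,\theta)\to(\lambda-\Lambda)\cos(\alpha\theta)$ as $r\to 0^+$, which is non-negative on $(-\theta_0,\theta_0)$ only if $\lambda\ge\Lambda$. For $\lambda_*\ge 0$, the same asymptotic principle shows that a negative $\lambda_*$ would force $\varphi\sim\lambda_*\,S^*\to-\infty$ along any ray with $\cos(\alpha\theta)>0$, contradicting $\varphi\ge 0$. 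The step I expect to be most delicate is checking rigorously that $P$ inherits the zero-trace condition on each side of $\partial\Omega_{2B}$ in the very-weak sense, a point that relies on the extension to curvilinear polygons of the trace and $N$-space theory of~\cite{Gris92} already invoked earlier in the paper.
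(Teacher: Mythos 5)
Your proposal follows essentially the same route as the paper: localize by a cutoff, peel off the $N(\Omega_{2B})$ contribution via $P = \chi_B\varphi - v$, identify $P = \lambda_*\,P_s^{2B}$, then bootstrap the $H^1$ remainder to $\Phi_2$ and $H^2$ via Proposition~\ref{pro:grisvard}. The one spot where you diverge slightly is the sign of $\lambda_*$: the paper derives it from the definitional requirement $\E^{-\varphi}\in H^{-1}(\Omega_R)$ (a negative $\lambda_*$ would make $\E^{-\varphi}$ blow up like $\exp(c\,r^{-\alpha})$ near the tip, destroying that membership), whereas you argue directly from $\varphi\ge 0$; both are fine in the paper's framework, where only non-negative solutions are in play — and indeed your $\Phi_2$ bootstrap on $\hat\varphi$ already relies on $\E^{-\varphi}\in L^\infty$, i.e.\ on $\varphi\ge 0$, so your version is internally consistent. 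Your more explicit justification of $\lambda\ge\Lambda$ via the Hölder embedding $H^2\hookrightarrow C^{0,1-\epsilon}$ and the limit $r^{-\alpha}\,w(r,\theta)\to(\lambda-\Lambda)\cos(\alpha\theta)$ fills in a step the paper leaves implicit, and is correct.
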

\begin{proof}
We only prove the last claim; the others are similar. By definition, a variational solution belongs to $ H^1(\Omega_R)$ for all finite~$R$; computing as in~\eqref{eq:chi.u} we see that $\chi_B\,\varphi \in \Phi_2(\Omega_{2B})$, hence the decomposition. As {any solution $\varphi\ge0$} is larger than the minimal solution~$u$, its singularity coefficient $\lambda$ is larger than that of~$u$, namely~$\Lambda$.

\medbreak

If $\varphi$ only is a very weak solution, one finds $\Delta(\chi_B\,\varphi) \in H^{-1}(\Omega_{2B})$. Introducing $v\in H^1_0(\Omega_{2B})$ such that $\Delta v = \Delta(\chi_B\,{\varphi})$ and $P := \chi_B\,\varphi -v$, one finds $P \in N(\Omega_{2B})$; thus, there is $\lambda_*\in\R$ such that $P = \lambda_*\, P_s^{2B} = \lambda_*\, (\chi_B\,S^* + \tilde P^{2B})$ and $\chi_B\,\varphi = \lambda_*\, \chi_B\,S^* + \varphi_B$, with $\varphi_B \in H^1_0(\Omega_{2B})$. As $B$ is arbitrary, we deduce that for any~$R$
$$\varphi(r,\theta) = \lambda_*\,S^*(r,\theta) + \hat\varphi(r,\theta),\quad \hat\varphi \in H^1(\Omega_R). $$
As $\Delta\hat\varphi = \Delta\varphi$, one finds $\chi_B\,\hat\varphi \in \Phi_2(\Omega_{2B})$, hence the second decomposition of~$\varphi$. The condition $\lambda_*\ge0$ is implied by the assumption $\E^{-\varphi} \in H^{-1}(\Omega_R)$.
\end{proof}

\subsection{Unboundedness}
We know from Remark~\ref{unbounded} that the minimal solution in a flat or reentrant sector is unbounded, hence all non-negative solutions are unbounded. Actually, the same holds in a salient sector.

\begin{pro}
Let $\Omega[\theta_0]$ be a salient sector: $\theta_0 < \pi/2$. Any non-negative solution to~\eqref{pde} satisfies:
\begin{equation}
\varphi(r,\theta) \ge \log(1 + \tfrac18\, r^2\,\sin^2(\theta_0 - |\theta|)).
\label{eq:unbound:salient}
\end{equation}
\end{pro}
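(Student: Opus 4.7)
The strategy is to exhibit an explicit classical subsolution of $-\Delta v = e^{-v}$ that vanishes on $\partial\Omega$ and to compare it to any non-negative solution from below. Since every non-negative solution satisfies $\varphi \ge u$ (Section~3.1) and the minimal solution $u$ is symmetric under $\theta \mapsto -\theta$ (Section~3.4), it suffices to establish the bound for~$u$. The candidate is
$$ \Psi(r,\theta) := \log\!\bigl(1 + \tfrac{1}{8}\,r^2\sin^2(\theta_0-|\theta|)\bigr), $$
which vanishes on $\partial\Omega$ and is smooth in each open half-sector $\Omega^{\pm}=\Omega\cap\{\pm\theta>0\}$. Writing $\Psi = \log(1 + r^2 f)$ with $f(\theta)=\sin^2(\theta_0-\theta)/8$ in $\Omega^{+}$ and computing the polar Laplacian, the subsolution condition $-\Delta\Psi \le e^{-\Psi}$ reduces to two pointwise inequalities on~$f$, namely $4f + f'' \ge -1$ and $(f')^2 - f f'' \le f$. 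A short trigonometric calculation gives $4f+f''=\tfrac{1}{4}$ and $(f')^2 - f f'' = \tfrac{1}{32}\sin^2(\theta_0-\theta)$, so both hold (the second as $1/32 \le 1/8$).

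Next, exploiting the even symmetry of $u$, the difference $w:=u-\Psi$ is continuous on $\overline\Omega$, vanishes on $\partial\Omega$, is $C^2$ away from the axis $\{\theta=0\}$, and displays a convex (V-shaped) kink along the axis with jump $[\![\partial_\theta w]\!] = 2\,a(r) > 0$ where $a(r) = r^2\sin(2\theta_0)/(8+r^2\sin^2\theta_0)$. Combining the smooth subsolution property of $\Psi$, the exact equation for $u$, and the mean value theorem $e^{-u}-e^{-\Psi} = -c\,w$ with $c>0$, we obtain the distributional inequality
$$ -\Delta w + c\,w + 2\,a(r)\,r^{-2}\,\delta(\theta) \;\ge\; 0 \quad\text{in } \Omega . $$
Testing against $w_-\,\chi_R^2$ for a smooth cutoff $\chi_R$ (no contribution on $\partial\Omega$ since $w_-$ vanishes there) and integrating by parts, we derive the energy estimate
$$ \tfrac{1}{2}\!\int|\nabla w_-|^2\chi_R^2 + \int c\,w_-^2\chi_R^2 \;\le\; 2\!\int w_-^2|\nabla\chi_R|^2 + \int_0^\infty 2\,a(r)\,r^{-1}\,w_-(r,0)\,\chi_R^2\,dr . $$
The last (axis) term can be controlled using a 1D trace--Poincar\'e bound $w_-(r,0)^2 \le \theta_0\int_0^{\theta_0}(\partial_\theta w_-)^2\,d\theta$ together with the uniform estimate $a(r) \le 2\cot\theta_0$; after AM-GM a fraction is absorbed into the LHS. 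Letting $R\to\infty$ forces $w_-\equiv 0$, i.e., $u\ge\Psi$ on $\Omega$.

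The hard part is making the final absorption step rigorous. The axis term must be dominated by the interior energy uniformly in $R$, which is why the numerical factor $1/8$ in the stated bound (comfortably below the critical value $1/2$ admitted by Step~2) appears: it is what guarantees that the trace--Poincar\'e estimate beats the singular axis contribution. Moreover, the $|\nabla\chi_R|^2$ error term must vanish as the cutoff expands, despite the logarithmic growth of both $u$ and $\Psi$ at infinity. A robust alternative bypassing the decay issue would be to first perform the comparison in the truncated sector~$\Omega_R$ using the monotone approximations $u_R$ of Section~3.1, controlling the loss on the arc $\{r=R\}$, and then pass to the limit via the monotonic convergence $u_R \nearrow u$.
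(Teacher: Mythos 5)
Your subsolution computation is correct and pretty: $\Psi = \log(1+r^2 f)$ with $f(\theta)=\tfrac18\sin^2(\theta_0-\theta)$ indeed satisfies $4f+f''=\tfrac14$ and $(f')^2-ff''=\tfrac1{32}\sin^2(\theta_0-\theta)\le f$, so $-\Delta\Psi\le\E^{-\Psi}$ holds classically in each half-sector. The jump computation $[\![\partial_\theta w]\!]=2a(r)$ with $a(r)=r^2\sin 2\theta_0/(8+r^2\sin^2\theta_0)$ is also correct, and the sign is the bad one (the delta lands on the wrong side when you test against $w_-\ge0$). You flag the absorption step as ``the hard part,'' and in fact it does not close as you describe. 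Two separate terms diverge. First, the cutoff error: on the annulus $R<r<2R$ one has $|\nabla\chi_R|^2\sim R^{-2}$ and area $\sim R^2$, while $w_-=(\Psi-u)_+$ can grow like $\log r$ since both $u$ and $\Psi$ grow logarithmically; so $\int w_-^2|\nabla\chi_R|^2 \sim (\log R)^2 \to\infty$. Second, and more seriously, the axis term: the trace--Poincar\'e bound plus AM--GM produces an error of the form $\int_1^R a(r)^2\,r^{-1}\,dr$, and since $a(r)\to 2\cot\theta_0$ is bounded away from zero at infinity (it does \emph{not} decay), this integral diverges logarithmically no matter how small a fraction of the Dirichlet energy you sacrifice. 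The uniform bound $a\le 2\cot\theta_0$ cannot rescue a non-integrable $r^{-1}$ tail. The suggested fallback of working in $\Omega_R$ with $u_R$ does not immediately help either: $u_R=0$ on the arc $\{r=R\}$ while $\Psi>0$ there, so $w_{R,-}>0$ on the arc, contributing a boundary term that must itself be estimated --- and the axis kink is still present.

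The paper's proof is entirely local and bypasses all of this. For a given $\xvec_0(r,\theta)\in\Omega$, take the largest open disk $D$ centred at $\xvec_0$ contained in $\Omega$; its radius is $R=r\sin(\theta_0-|\theta|)$. Any non-negative solution $\varphi$ is a supersolution of the Dirichlet problem $-\Delta\ul\varphi=\E^{-\ul\varphi}$ in $D$, $\ul\varphi=0$ on $\partial D$, whose solution is explicit and radial,
\begin{equation*}
\ul\varphi(\xvec)=\log\!\left(\frac{(A^2-|\xvec-\xvec_0|^2)^2}{8A^2}\right),\qquad A=\sqrt2+\sqrt{2+R^2}.
\end{equation*}
Comparison on the bounded disk gives $\varphi\ge\ul\varphi$ and evaluating at the centre yields $\varphi(\xvec_0)\ge\log(A^2/8)\ge\log(1+R^2/8)$, which is exactly~\eqref{eq:unbound:salient}. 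No global energy estimate, no kink, no issues at infinity. I would recommend adopting this pointwise comparison; your subsolution $\Psi$ is an interesting object, but turning it into a rigorous global comparison requires resolving the two divergences above, which your current sketch does not do.
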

\begin{proof}
Let $\xvec_0(r,\theta) \in \Omega$. The radius of the largest disk $D$ centred at $\xvec_0$ and contained in~$\Omega$ is $R = r\,\sin(\theta_0 - |\theta|)$. We have $\varphi \in H^1(D)$ by Proposition~\ref{local-rgty-gene}. Introduce the variational solution $\ul\varphi$ to the problem
\begin{equation}
-\Delta \ul\varphi = {\E^{-\ul\varphi} } \text{ in } D,\quad \ul\varphi = 0.
\label{pde:disk}
\end{equation}
This solution can be computed explicitly:
\begin{equation*}
{\ul\varphi}(\xvec) = \log\left( \frac{(A^2 - |\xvec - \xvec_0|^2)^2}{8\,A^2} \right), \quad\text{with:}\quad A = \sqrt 2+ \sqrt{2+R^2}.
\label{eq:phi.A}
\end{equation*}
On the other hand, $\varphi$ is a supersolution to~\eqref{pde:disk}, and $\varphi \ge \ul\varphi$ on~$D$. In particular
\begin{equation*}
\varphi(\xvec_0) \ge \ul\varphi(\xvec_0) = \log\left( R^2 + 4 + \sqrt{8R^2 + 16} \right) - \log 8 \ge \log( 1 + R^2/8),
\end{equation*}
which is~\eqref{eq:unbound:salient}.
\end{proof}
\begin{rem}\label{unbounded-bis}
The same holds in a reentrant sector, with $R = r\, \sin( \min(\theta_0 - |\theta|, \frac\pi2) )$.
\end{rem}

\section{ Application to plasma equilibria}

In \cite{KaLP13} the authors study the properties of the solution $\phi$
to the problem
\begin{equation}\label{vlasovpoisson0}
- \Delta\phi=  \kappa\, \exp(\phie-\phi):= \rho \text{ in } \Upsilon_1,\quad\text{with:}\quad
  \int_{\Upsilon_1} \rho =M
\end{equation}
in a bounded polygonal (or curvilinear polygonal) domain $\Upsilon_1$ of $\mathbb{R}^2$.
The \emph{mass} parameter~$M$ is a data of the problem; the normalization factor~$\kappa$ is an unknown which ensures that the constraint $\int_{\Upsilon_1} \rho =M$ is satisfied.
The external potential $\phie$ is fixed; it belongs to $L^\infty(\Upsilon_1)$.

\medbreak

{We shall need} the following assumptions.
\begin{enumerate}
\item Eq.~\eqref{vlasovpoisson0} is supplemented with a homogeneous Dirichlet boundary condition.
\item\label{global-shape} {If the domain $\Upsilon_1$ has reentrant corners, it is contained in its local tangent cone at any of them.}
\end{enumerate}
{The second condition is automatically satisfied when $\Upsilon_1$ is a straight polygon with only one reentrant corner: then, it}
can be described as $\Upsilon_1 = \Upsilon_1^C \setminus (\R^2 \setminus \Omega) \subset \Omega$, where $\Upsilon_1^C$ is the convex envelope of~$\Upsilon_1$, {and $\Omega$ is the tangent cone at the reentrant corner.}

\medbreak

We say that the domain $\Upsilon_1$ is \emph{regular} if it has no reentrant corner, i.e., it is smooth where it is not convex and convex where it is not smooth. Otherwise, it is \emph{singular}. To simplify the exposition, we shall only consider singular domains with one reentrant corner of opening $2\theta_0 = \pi/\alpha,\ 1/2 < \alpha <1$, and such that the two sides that meet there are locally straight. In that case, the decomposition of $\phi$ with respect to regularity writes  as in~\S\ref{ssc:rgty}:
\begin{equation}
\phi= \tilde\phi + \lambda\, \chi(r)\, r^{\alpha}\, \cos(\alpha \theta),\quad \tilde\phi \in H^2 \cap H^1_0(\Upsilon_1),
\label{dec:phi}
\end{equation}
assuming that the reentrant corner is located at~$0$ and the axes are suitably chosen.
Notice, however, that these conditions are not essential as long as assumption~\eqref{global-shape} above is satisfied. If the sides are not locally straight, the expression of the singular part is modified; the case of multiple reentrant corners is treated by localization.

\medbreak

We shall generally discuss the cases of regular and singular domains together; statements about singularity coefficients are void for a regular domain.
The goal of this section is to study the behaviour of the coefficients $\kappa$ and $\lambda$ as $M \to +\infty$. In~\cite{KaLP13} it was proved that:
\begin{equation}
\frac{M}{\kappa} \to 0 \quad \mbox{and}\quad \frac{\lambda}{\kappa} \to 0  \quad \mbox{and}\quad \lambda \to +\infty .
\label{eq:l6}
\end{equation}
We want to refine the estimates~\eqref{eq:l6} by obtaining explicit growth rates.

\subsection{Getting rid of the external potential}\label{ssc:phie}
For a fixed external potential $\phie$, the parameters $M$ and $\kappa$ are strictly increasing functions of each other~\cite{KaLP13}. Thus, the problem~\eqref{vlasovpoisson0} can be parametrized by~$\kappa$, even though $M$ is the significant variable. For fixed $\kappa$ and $\phie$, we provisionally denote $M[\kappa;\phie] = \int_{\Upsilon_1} \kappa\,\exp(\phie-\phi)$, where $\phi$ solves~\eqref{vlasovpoisson0}, and $\lambda[\kappa;\phie]$ the singularity coefficient of~$\phi$.

\smallbreak

Let $\phie^{\min},\ \phie^{\max}$ be the lower and upper bounds of~$\phie$. Obviously, for a given~$\kappa$ there holds:
\begin{equation}
M[\kappa\,\E^{\phie^{\min}};0] = M[\kappa;\phie^{\min}] \le M[\kappa;\phie] \le M[\kappa;\phie^{\max}] = M[\kappa\,\E^{\phie^{\max}};0] .
\label{e7.1}
\end{equation}

\medbreak

Similarly, we know~\cite{KaLP13} that for two given external potentials $(\phie^1,\phie^2)$ and normalization factors $(\kappa^1,\kappa^2)$ the corresponding solutions $(\phi^1,\phi^2)$ and their singularity coefficients satisfy
\begin{equation*}
\phie^1 + \log\kappa^1 \ge \phie^2 + \log\kappa^2 \text{ in } {\Upsilon_1}
\ \Rightarrow\ \phi^1 \ge \phi^2  \text{ in } {\Upsilon_1} \ \Rightarrow\ \lambda^1 \ge \lambda^2 .
\end{equation*}
So we deduce:
\begin{equation}
\lambda[\kappa\,\E^{\phie^{\min}};0] = \lambda[\kappa;\phie^{\min}] \le \lambda[\kappa;\phie] \le \lambda[\kappa;\phie^{\max}] = \lambda[\kappa\,\E^{\phie^{\max}};0] .
\label{e7.2}
\end{equation}

\medbreak

Therefore, we shall mostly concentrate on the case $\phie=0$. Using the physical scaling $\kappa =\epsilon^{-2}$, with $\epsilon\to0$, we shall denote by $\phi_\epsilon$ the solution to~\eqref{vlasovpoisson0} with $\phie=0$, i.e.:
\begin{equation}\label{vlasovpoisson}
-\Delta \phi_\epsilon= \epsilon^{-2} {\E^{-\phi_\epsilon} } \text{ in } \Upsilon_1,\quad
\phi_\epsilon=0 \text{ on } \partial\Upsilon_1 \,;
\end{equation}
the corresponding mass will be written~$M_\epsilon$; and $\lambda_\epsilon$ is the singularity coefficient of~$\phi_\epsilon$.

\subsection{Limit of $\phi_\epsilon$ and $\lambda_\epsilon$}
{Now we assume that the domain~$\Upsilon_1$ is singular, and we}
set the origin at the reentrant corner. First, we prove by some blow-up argument:

\begin{pro}
For any $\xivec$ in the unbounded sectorial domain $\Omega$, the rescaled function $v_\epsilon(\xivec)=\phi_\epsilon(\epsilon \xivec)$, defined for $\epsilon$ small enough,
converges towards $u(\xivec)$, where $u$ is the minimal solution to~\eqref{pde}
in~$\Omega$.
\end{pro}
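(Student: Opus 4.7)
The plan is a two-sided comparison (squeeze) argument: bound $v_\epsilon$ from above by the minimal solution $u$ on a large dilated domain, and from below by the truncated minimal solution $u_R$ on a fixed smaller sector, then let $\epsilon \to 0$ and $R \to \infty$.

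First I would record the scaling. Writing $\tilde\Omega_\epsilon := \epsilon^{-1}\,\Upsilon_1$, a direct chain-rule computation shows that $v_\epsilon(\xivec) = \phi_\epsilon(\epsilon\xivec)$ solves $-\Delta v_\epsilon = \E^{-v_\epsilon}$ in $\tilde\Omega_\epsilon$ with $v_\epsilon = 0$ on $\partial\tilde\Omega_\epsilon$. The geometric assumption~\eqref{global-shape} gives $\tilde\Omega_\epsilon \subset \Omega$ for all $\epsilon$, and, near the reentrant corner, yields some $\delta>0$ with $\Omega\cap B(0,\delta) \subset \Upsilon_1$, so that after rescaling $\Omega_R \subset \tilde\Omega_\epsilon$ whenever $\epsilon R \le \delta$. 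Thus $v_\epsilon$ is defined on every fixed $\Omega_R$ once $\epsilon$ is small.

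Next I would state the comparison principle for our equation: if $w_1,w_2$ are variational solutions of $-\Delta w = \E^{-w}$ in a bounded Lipschitz (or curvilinear polygonal) domain $D$ with $w_1\le w_2$ on $\partial D$, then $w_1\le w_2$ in~$D$. The proof is the usual one, testing the equation for $w_1 - w_2$ against $(w_1-w_2)_+$ and exploiting that $w \mapsto -\E^{-w}$ is non-decreasing, so the nonlinear term has the favourable sign. The local variational regularity of all functions involved, established in Proposition~\ref{local-rgty} and Proposition~\ref{local-rgty-gene} (for $u$ and $u_R$) and by the standard theory on $\Upsilon_1$ (for $\phi_\epsilon$, hence $v_\epsilon$), ensures that this principle applies on the bounded domains $\Omega_R$ and $\tilde\Omega_\epsilon$.

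With these tools, for any fixed $R>0$ and $\epsilon \le \delta/R$ I obtain:
\begin{itemize}
\item \emph{Upper bound.} The minimal solution $u$, restricted to the bounded domain~$\tilde\Omega_\epsilon\subset\Omega$, satisfies the same PDE and obeys $u \ge 0 = v_\epsilon$ on $\partial\tilde\Omega_\epsilon$ (equality on the parts of $\partial\tilde\Omega_\epsilon$ that lie on $\partial\Omega$, inequality on the rest). Comparison gives $v_\epsilon \le u$ on~$\tilde\Omega_\epsilon$.
\item \emph{Lower bound.} On $\Omega_R \subset \tilde\Omega_\epsilon$, the function $v_\epsilon$ satisfies the PDE; on $\partial\Omega_R$ it vanishes on the two radial sides (which coincide with $\partial\Omega \subset \partial\tilde\Omega_\epsilon$) and is non-negative on the arc $\{r=R\}$. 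Since $u_R = 0$ on $\partial\Omega_R$, comparison gives $u_R \le v_\epsilon$ on~$\Omega_R$.
\end{itemize}
Combining, $u_R(\xivec) \le v_\epsilon(\xivec) \le u(\xivec)$ at every $\xivec\in\Omega_R$ for $\epsilon\le\delta/R$. Letting $\epsilon\to0$ and then $R\to+\infty$, using the monotone convergence $u_R\nearrow u$ established in Section~\ref{limitingproblem}, yields pointwise convergence $v_\epsilon(\xivec) \to u(\xivec)$ for every $\xivec\in\Omega$.

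The only subtle point is the upper bound: it requires comparing $v_\epsilon$ and $u$ on the full dilated domain~$\tilde\Omega_\epsilon$ (not on a fixed $\Omega_R$, where one has no a priori control of $v_\epsilon$ on the inner arc). This is where assumption~\eqref{global-shape}, giving $\tilde\Omega_\epsilon\subset\Omega$, is essential. Everything else is a routine application of the maximum principle and the monotone construction of~$u$.
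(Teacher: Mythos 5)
Your proof is correct and follows essentially the same route as the paper: sandwich $v_\epsilon$ between the truncated minimal solution $u_R$ (using $\Omega_R\subset\epsilon^{-1}\Upsilon_1$) and the minimal solution $u=u_{[\theta_0]}$ (using assumption~\eqref{global-shape} to get $\epsilon^{-1}\Upsilon_1\subset\Omega$), then invoke the monotone limit $u_R\nearrow u$. The paper phrases the squeeze with $u_{R/\epsilon}$ for a fixed inner radius $R$ rather than with a fixed $\Omega_R$ and $\epsilon\le\delta/R$, but the two bookkeepings are equivalent.
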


\begin{proof}
{Let $\Gamma_1$ be} the union of the two sides that meet at the reentrant corner~$0$, {and} $R$ the radius of the circle centered at~$0$ that is tangent {to} $\partial\Upsilon_1 \setminus \Gamma_1$; in other words we consider $R$ to be the maximum of $r$ such that $\Omega_r=\Omega[\theta_0]\cap \{|\xvec|<r\}$  is included in $\Upsilon_1$.

\medbreak

We now perform the blow-up argument by a dilation of factor $\frac 1 \epsilon$. We obtain that $v_\epsilon$ is the solution to the following problem in the dilated domain~$\Upsilon_{\frac 1 \epsilon}$:
\begin{equation}\label{dilated}
-\Delta v_\epsilon = {\E^{-v_\epsilon} } \text{ in } \Upsilon_{\frac 1 \epsilon},\quad
v_\epsilon=0 \text{ on } \partial\Upsilon_{\frac 1 \epsilon} \,,
\end{equation}
while $u_{[\theta_0]}$ is a supersolution {--- by assumption~\eqref{global-shape}, the dilated domain is contained in~$\Omega[\theta_0]$}. Furthermore, we have $\Omega_{\frac R \epsilon} \subset \Upsilon_{\frac 1 \epsilon}$; therefore, $v_\epsilon$ is a supersolution to the same problem set in $\Omega_{\frac R \epsilon}$.
Summarizing, we have for any $\xivec$ in~$\Omega_{\frac R \epsilon}$:
\begin{equation}\label{gendarme}
u_{\frac R \epsilon}(\xivec)\leq \phi_\epsilon(\epsilon \xivec) = v_\epsilon(\xivec) \leq u_{[\theta_0]}(\xivec),
\end{equation}
and the result follows promptly from the results of Subsection~\ref{limitingproblem}.
\end{proof}

\medbreak

Denoting the polar coordinates as $(r,\theta)$ for~$\xvec$, $(\rho = \epsilon^{-1}\,r,\theta)$ for~$\xivec$, the regular-singular decomposition of~$v_\epsilon$ writes:
\begin{equation}
v_\epsilon(\rho,\theta) = \Lambda_\epsilon\, \rho^{\alpha}\, \cos(\alpha\theta) + \tilde u_\epsilon(\rho,\theta),\quad \tilde u_\epsilon \in H^2(\Upsilon_{\frac 1 \epsilon}) \,;
\end{equation}
obviously, the singularity coefficient $\Lambda_\epsilon$ is related to that of~$\phi_\epsilon$ as: $\Lambda_\epsilon = \epsilon^\alpha\,\lambda_\epsilon$.

\medbreak

On the other hand, Eq.~\eqref{gendarme} implies that $\Lambda_\epsilon$ is bounded between the singularity coefficients of $u_{\frac R \epsilon}$ and~$u$. By Remark~\ref{rem:LambdaR}, one deduces $\Lambda_\epsilon \to \Lambda$. In other words, when $\phie=0$, there holds:
\begin{equation}
\lambda_\epsilon \sim \Lambda\,\epsilon^{-\alpha} \quad \text{as } \epsilon\to0, \quad\text{i.e.,}\quad \lambda[\kappa;0] \sim \Lambda\,\kappa^{\alpha/2} \quad \text{as } \kappa\to+\infty.
\label{eq:lambda}
\end{equation}

\subsection{Limit of $M_\epsilon$}
We now state and prove a result that {complements} the numerical evidence in~\cite{KaLP13}.

\begin{pro}\label{pro:masse}
Let $|\partial \Upsilon_1|$ {be} the perimeter of $\Upsilon_1$.
When $\epsilon \rightarrow 0$, there holds:
\begin{equation}\label{equivalentdelamasse}
M_\epsilon \sim \sqrt 2 |\partial \Upsilon_1| \epsilon^{-1}.
\end{equation}
\end{pro}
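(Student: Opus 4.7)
The approach is to rewrite $M_\epsilon$ as a boundary flux via Green's identity and then analyse the rescaled normal derivative $-\epsilon\,\partial_n\phi_\epsilon$ on $\partial\Upsilon_1$. I will show that this quantity converges pointwise to the constant $\sqrt2$ on the smooth part of the boundary (the inward normal derivative at the boundary of the 1D half-plane minimal solution of Remark~\ref{unbounded}), while the corner contributions are of lower order.

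\textbf{Step 1 (Boundary flux).} By Proposition~\ref{local-rgty-gene}, $\phi_\epsilon \in H^1(\Upsilon_1)$ admits the decomposition $\phi_\epsilon = \lambda_\epsilon\,\chi\,S + \tilde\phi_\epsilon$ with $\tilde\phi_\epsilon\in H^2$ near the reentrant corner, and is $C^1$ up to the boundary elsewhere. Green's identity yields
\begin{equation*}
M_\epsilon = \int_{\Upsilon_1} (-\Delta\phi_\epsilon)\,d\xvec = -\int_{\partial\Upsilon_1}\partial_n\phi_\epsilon\,d\sigma,
\end{equation*}
the integral being absolutely convergent since $|\partial_n\phi_\epsilon| = O(r^{\alpha-1})$ near the reentrant tip.

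\textbf{Step 2 (Pointwise limit on the smooth part).} Fix a smooth boundary point $\xvec_0 \in \partial\Upsilon_1$ (not a corner) and consider the blow-up $w_\epsilon(\xivec) := \phi_\epsilon(\xvec_0 + \epsilon\,\xivec)$, which solves $-\Delta w_\epsilon = \E^{-w_\epsilon}$ on $\Omega_\epsilon^{\xvec_0} := \epsilon^{-1}(\Upsilon_1 - \xvec_0)$ with zero Dirichlet data. The latter converges in the Kuratowski sense to the tangent half-plane~$H^{\xvec_0}$, on which the minimal solution is (up to isometry, by Remark~\ref{unbounded}) the one-dimensional function $u_{\rm hp}^{\xvec_0}(\xivec) = 2\log(1+\mathrm{dist}(\xivec,\partial H^{\xvec_0})/\sqrt 2)$, with $-\partial_n u_{\rm hp}^{\xvec_0}(0) = \sqrt2$. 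The \emph{lower bound} $\liminf w_\epsilon \geq u_{\rm hp}^{\xvec_0}$ follows, as in Subsection~\ref{limitingproblem}, by comparison with the solutions on truncations $\Omega_\epsilon^{\xvec_0} \cap B(0,R)$ and monotonicity in~$R$. For the \emph{upper bound}, I construct a global supersolution according to the position of~$\xvec_0$: if $\xvec_0 \in \partial\Upsilon_1 \cap \partial\Upsilon_1^C$ (convex hull), take the solution of~\eqref{vlasovpoisson} on the convex domain~$\Upsilon_1^C$; if $\xvec_0$ lies on one of the two sides emanating from the reentrant corner (in the interior of~$\Upsilon_1^C$), take instead $u(\cdot/\epsilon)$ where $u$ is the minimal solution on the tangent cone~$\Omega[\theta_0]$, the inclusion $\Upsilon_1 \subset \Omega[\theta_0]$ being guaranteed by assumption~\eqref{global-shape}. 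The comparison principle applies since $u\mapsto\E^{-u}$ is decreasing. In both cases the blow-up of the supersolution at~$\xvec_0$ converges to the same limit $u_{\rm hp}^{\xvec_0}$ (in the first case by applying the argument again inside the convex domain, in the second by the half-plane-like asymptotics of~$u$ far from the tip); elliptic $C^1$~regularity up to the boundary then yields
\begin{equation*}
-\epsilon\,\partial_n\phi_\epsilon(\xvec_0) \longrightarrow \sqrt 2 \quad \text{as } \epsilon\to0.
\end{equation*}

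\textbf{Step 3 (Majorant and conclusion).} To pass to the limit in the boundary integral I need an $\epsilon$-uniform integrable majorant of $|\epsilon\,\partial_n\phi_\epsilon|$. The supersolutions of Step~2 provide a uniform constant bound on compact subsets of the smooth part. At a convex corner of opening $2\theta_0^c<\pi$, the analogous blow-up converges to the minimal solution on a salient tangent sector, whose normal derivative is $O(r^{\alpha^c-1})$ with $\alpha^c = \pi/(2\theta_0^c) > 1$, hence uniformly bounded. Near the reentrant corner, the decomposition of~$\phi_\epsilon$ and the bound $\lambda_\epsilon \sim \Lambda\,\epsilon^{-\alpha}$ from~\eqref{eq:lambda} give $|\partial_n\phi_\epsilon| \lesssim \lambda_\epsilon\,r^{\alpha-1}$, so that the contribution of a boundary piece of radius~$\delta$ at the corner is $O(\lambda_\epsilon\,\delta^\alpha) = O(\delta^\alpha\,\epsilon^{-\alpha}) = o(\epsilon^{-1})$ as $\epsilon\to0$ for fixed~$\delta$ (since $\alpha<1$). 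Splitting $\partial\Upsilon_1$ into smooth part, convex-corner neighbourhoods, and a $\delta$-neighbourhood of the reentrant corner, taking $\epsilon\to0$ first and then $\delta\to0$, we obtain $\epsilon\,M_\epsilon \to \sqrt 2\,|\partial\Upsilon_1|$, which is~\eqref{equivalentdelamasse}.

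The main obstacle is the rigorous justification of the $C^1$ convergence in Step~2, for base points $\xvec_0$ of varying geometric type and for dilated domains depending on~$\epsilon$; assumption~\eqref{global-shape} plays a decisive role in producing the required global supersolution.
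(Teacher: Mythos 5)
Your approach is genuinely different from the paper's. You aim to show that the rescaled normal derivative $-\epsilon\,\partial_n\phi_\epsilon$ converges pointwise to $\sqrt2$ on the smooth boundary and then to conclude by dominated convergence. The paper never passes through $C^1$ blow-up convergence: instead it works with the shifted unknown $w_\epsilon = -2\log\epsilon - \phi_\epsilon$ (which converges to the boundary blow-up solution of $\Delta w = \E^w$), and derives \emph{explicit two-sided bounds} on $\partial_n w_\epsilon$ at each boundary point by comparison with radial solutions on an interior tangent disk and an annulus built from an exterior tangent disk. These radial solutions are controlled by an elementary ODE energy identity, giving $v'(\eta) \ge 2\eta\epsilon^{-2}\big/\big(\sqrt{2\eta^2\epsilon^{-2}}+2\big)$ on the disk and $|v'(a)| \le \tfrac4a + \tfrac{\sqrt2}{\epsilon}$ on the annulus, and the reentrant corner is handled by rounding the domain to $\Upsilon_1\setminus B(0,\eta)$ with $\eta = \epsilon^{2/3}$. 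No uniform elliptic regularity up to the boundary of $\epsilon$-dependent domains is ever needed.

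The serious gap in your proposal is the upper bound of Step~2 for boundary points $\xvec_0$ lying on the two sides that emanate from the reentrant corner. There you take as supersolution $u(\cdot/\epsilon)$, with $u$ the minimal solution on the tangent cone $\Omega[\theta_0]$, and you need $-\epsilon\,\partial_n\big[u(\cdot/\epsilon)\big](\xvec_0) = -\partial_n u(\xvec_0/\epsilon) \to \sqrt2$ as $\epsilon\to0$, i.e., that $-\partial_n u \to \sqrt2$ along the rays $\theta=\pm\theta_0$ as $r\to+\infty$. This is \emph{not} established anywhere (Lemma~\ref{derivative} controls $r\,\partial_r u$, which is tangential, not $\partial_n u = -r^{-1}\partial_\theta u$), and even its boundedness, which is what you need for the dominated convergence in Step~3, is open. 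Because the sector is reentrant, the tangent half-plane at such $\xvec_0$ does not contain the sector, so the clean convex argument (compare with the 1D solution on an exterior tangent half-plane, which gives $-\epsilon\,\partial_n\phi_\epsilon(\xvec_0)\le\sqrt2$ outright) is unavailable precisely where the geometry is nontrivial; this is exactly the obstacle that the paper's annulus subsolution sidesteps, since an exterior tangent \emph{ball} always exists away from the corner and yields an explicit bound. A related but lesser issue is Step~3's majorant near the tip: the estimate $|\partial_n\phi_\epsilon|\lesssim\lambda_\epsilon\,r^{\alpha-1}$ with a constant uniform in $\epsilon$ requires a uniform control of $\partial_n\tilde\phi_\epsilon$ in the decomposition $\phi_\epsilon = \lambda_\epsilon\chi S + \tilde\phi_\epsilon$, which $H^2$ regularity of $\tilde\phi_\epsilon$ alone does not furnish pointwise, and which would again be $\epsilon$-dependent. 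In short, the strategy is attractive but shifts the technical burden onto blow-up asymptotics of the normal derivative of $u$ and onto uniform boundary regularity on degenerating domains, neither of which is presently at hand; the paper's barrier argument is more elementary and self-contained.
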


\begin{proof}
Consider $\phi_\epsilon$ solution to \eqref{vlasovpoisson}. The main idea is to
\begingroup
 write $M_\epsilon = \int_{\Upsilon_1} -\Delta\phi_\epsilon = - \int_{\partial \Upsilon_1} \partial_n \phi_\epsilon$, and to derive an equivalent to $\partial_n \phi_\epsilon$ as $\epsilon\to0$.
Unfortunately, in a non-smooth domain this equivalent is not uniform along~$\partial\Upsilon_1$, and cannot be straightforwardly integrated on this boundary; thus some technicalities are needed to overcome the difficulty.
\endgroup

\medbreak

Introduce the function $w_\epsilon(\xvec) := -2\log \epsilon -\phi_\epsilon(\xvec)$, which solves:
\begin{equation}\label{bbus1}
\begin{split}
\Delta w_\epsilon= {\E^{w_\epsilon} } \text{ in } \Upsilon_1, \\
w_\epsilon= -2\log \epsilon \text{ on } \partial\Upsilon_1.
\end{split}
\end{equation}
We know (see \cite{KaLP13}) that $w_\epsilon$ converges to the boundary blow-up solution (or large solution) to $\Delta w = \E^w$ in $\Upsilon_1$.

\smallbreak

For almost every point $\xvec_0$ on the boundary of $\Upsilon_1$ (except the corners), we have the interior and exterior sphere condition: there is a (small) ball $B$ that is included in~$\Upsilon_1$ {(resp.~$\R^2 \setminus \overline{\Upsilon_1}$)} and tangent at $\xvec_0$.

\begin{figure}[ht]
\centerline{\includegraphics[width=12cm]{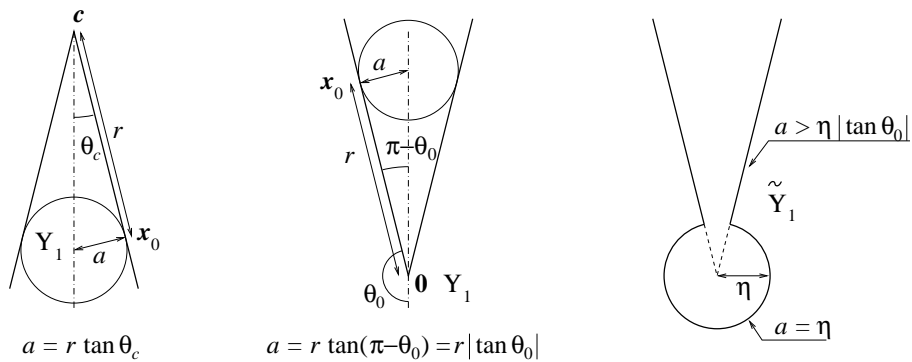}}
\caption{\emph{Left}: The interior sphere condition near a salient corner. \emph{Middle}: The exterior sphere condition near a reentrant corner. \emph{Right}: The mollified/rounded domain~$\tilde{\Upsilon}_1$.}
\label{fig:spheres}
\end{figure}

We begin with a lower bound for $M_\epsilon$. Fix $\eta>0$ small enough. We consider $\partial\Upsilon_{1,\eta}$ the set of $\xvec_0 \in \partial\Upsilon_{1}$ such that there exists a ball $B$ {\it of radius~$\eta$}, included in~$\Upsilon_1$ and tangent at $\xvec_0$.
\begingroup

If $\Upsilon_1$ is a straight polygon and $2\theta_c \in (0,\pi)$ is the angle at the corner~$\boldsymbol{c}$, the maximum radius of an interior sphere tangent at a nearby point $\xvec_0$ is $|\xvec_0 - \boldsymbol{c}| \, \tan \theta_c$; see Figure~\ref{fig:spheres} left. Elsewhere, this radius is bounded below by a constant. We deduce that there exists a constant~$K$ such that $|\partial\Upsilon_{1,\eta}| \ge |\partial\Upsilon_{1}|-K\eta$. This extends to a curvilinear polygon by diffeomorphism; anyway, $|\partial\Upsilon_{1}| - |\partial\Upsilon_{1,\eta}| \to0$ as $\eta\to0$.

\smallbreak

Fix $\xvec_0 \in \partial\Upsilon_{1,\eta}$, and let $B$ be the ball defined above. Solving the equation
\endgroup
\begin{equation}\label{bbus2}
\begin{split}
\Delta \overline{v}= {\E^{\overline{v}} }\; \mbox{in }\; B, \\
\overline{v} = -2\log \epsilon\; \mbox{on} \; \partial B,
\end{split}
\end{equation}
we easily find that $w_\epsilon$ is a subsolution to~\eqref{bbus2}, so $w_\epsilon(\xvec)\leq  \overline{v}(\xvec)$ for any $\xvec$ in~$B$, and then $\partial_n w_\epsilon (\xvec_0) \geq \partial_n \overline{v}(\xvec_0)$.

\medbreak

We now compute the normal derivative of the solution to $\Delta v = \E^v$ in a given ball. Up to a translation assume that the ball is centered at the origin.
{The solution is radially symmetric, i.e. we solve
\begin{equation}\label{radial1}
(r v'(r))'= r \E^{v(r)}.
\end{equation}
for $r<\eta$; and $\partial_n v = v'(\eta)\ge0$. Multiplying this equation by $r v'(r)$ and integrating between $0$ and $\eta$, we find:}
\begin{equation}\label{2mai1}
\frac{\eta^2 (v'(\eta))^2}{2}=\eta^2 \E^{-2\log \varepsilon} - {2\int_0^\eta r \E^{v(r)}\, dr} = \frac{\eta^2}{\epsilon^2} - 2\eta v' (\eta),
\end{equation}
\noindent using once more $r \E^{v(r)} = (r v'(r))'$ and the fact that $v=-2\log \epsilon$ {on} the boundary.
We infer from this equality:
\begin{equation}\label{2mai2}
v' (\eta)\geq \frac{2\eta\epsilon^{-2}}{\sqrt{2\eta^2\epsilon^{-2}}+2}.
\end{equation}
We then have
\begin{equation}\label{2mai3}
M_\epsilon\geq  \int_{\partial\Upsilon_{1,\eta}} \partial_n w_\epsilon (\xvec_0)\geq  \frac{2\eta\epsilon^{-2}}{\sqrt{2\eta^2\epsilon^{-2}}+2}\, (|\partial\Upsilon_{1}|-K\eta).
\end{equation}
Therefore
\begin{equation}\label{2mai4}
\liminf(\epsilon M_\epsilon)\geq \sqrt 2 (|\partial\Upsilon_{1}|-K\eta).
\end{equation}
We then let $\eta$ go to zero to obtain the lower bound.

\medbreak

We now proceed to the upper bound. {If $\xvec_0$ is not a corner,} we have another small ball $B_1$ included in $\mathbb{R}^2 \setminus \overline{\Upsilon_1}$ that is tangent at $\xvec_0$. Introduce another ball $B_2$, with the same center as~$B_1$, and large enough to have $\Upsilon_1 \subset B_2$.
Then consider the annulus $N=B_2 \setminus B_1$ that contains $\Upsilon_1$, and solve the boundary-value problem:
\begin{equation}\label{bbus3}
\begin{split}
&\Delta \underline{v}= {\E^{\underline{v}} } \text{ in } N, \\
& \underline{v} = -2\log \epsilon \text{ on } \partial N.
\end{split}
\end{equation}
The solution $\underline{v} \leq -2\log \epsilon$, thus it appears as a subsolution to~\eqref{bbus1}, and $\underline{v} \leq w_\epsilon$ in~$\Upsilon_1$;  at~$\xvec_0$, we have $\partial_n \underline{v} \geq \partial_n w_\epsilon$.

\medbreak

The rest of the proof amounts to compute the normal derivative of the solution to $\Delta v = \E^v$ in an annulus of radii $a<b$.
{Once again, the solution is radially symmetric, i.e. we solve~\eqref{radial1} for $a<r<b$. In the case of~\eqref{bbus3},}
we know that there exists $c\in (a,b)$ such that $v'(c)=0$, and that {$v'(a)\le0$}.
Multiplying~{\eqref{radial1}} by $r v'(r)$ and integrating between $r=a$ and~$c$, we have:
\begin{equation}\label{radial2}
- (a v'(a))^2 = 2\int_a^c {r^2 {\left(\E^{v(r)}\right)'}\, dr} = 2c^2\E^{v(c)}-2a^2 \E^{-2\log \epsilon}+4 a v'(a),
\end{equation}
{as in~\eqref{2mai1}. We}
thus obtain $|v'(a)|\leq \frac 2 a +\sqrt{{\frac 4{a^2}} + \frac {2}{ \epsilon^2}}\leq \frac 4 a + \frac {\sqrt 2}{ \epsilon }$.
This estimate provides us with a sharp {bound} of $\partial_n w$, but {it blows up near} the reentrant corner in the singular case: in this case $a \rightarrow 0$.
{{F}rom Figure~\ref{fig:spheres} middle, we see that the maximum radius of an exterior sphere tangent at $\xvec_0$ is $|\xvec_0 | / |\tan \theta_0|$, if the reentrant corner is located at~$0$.}

\medbreak

We overcome this difficulty as follows.
For $\eta>0$, consider a mollified/rounded domain $\tilde{\Upsilon}_1=\Upsilon_1 \setminus B(0,\eta)$, see Figure~\ref{fig:spheres} right. The maximum radius of the exterior sphere is equal to~$\eta$ on the rounded part of the boundary. As shown above, it is at least $\eta / |\tan \theta_0|$ on the remaining part of the sides that meet at the reentrant corner; elsewhere, it is bounded below by a constant.

\smallbreak

{Then introduce $\tilde{w}_\epsilon$} solution to
\begin{equation}\label{bbus18}
\begin{split}
\Delta \tilde{w}_\epsilon = {\E^{ \tilde{w}_\epsilon } }\; \mbox{in }\; \tilde{\Upsilon}_1, \\
\tilde{w}_\epsilon  = -2\log \epsilon\; \mbox{on} \; \partial\tilde{\Upsilon}_1,
\end{split}
\end{equation}
extended to $-2\log \epsilon$ {outside $\tilde{\Upsilon}_1$. As} $w_\epsilon$ is a subsolution to this problem, {there holds $w_\epsilon \le \tilde{w}_\epsilon$ on~$\tilde{\Upsilon}_1$, and also on~$\Upsilon_1 \setminus \tilde{\Upsilon}_1$.}
Setting $\tilde{M}_\epsilon= \int_{\tilde{\Upsilon}_1} {\E^{\tilde{w}_\epsilon }}$, we then have:
\begin{equation}\label{final18}
M_\epsilon \:= \int_{\Upsilon_1} {\E^{w_\epsilon}} \leq \tilde{M}_\epsilon +\int_{\Upsilon_1 \setminus \tilde{\Upsilon}_1} \frac{1}{\epsilon^2}.
\end{equation}
On the one hand, since each point of {$\partial\tilde{\Upsilon}_1$} satisfies the exterior sphere condition with a ball of radius {proportional to~$\eta$ (for $\eta$ small enough)}, we have:
\begin{equation}\label{final19}
\tilde{M}_\epsilon { \:=\:} \int_{\partial \tilde{\Upsilon}_1} \partial_n \tilde{w}_\epsilon \leq \left( \frac4{{c}\eta} + \frac {\sqrt 2}{ \epsilon } \right) \, |\partial \tilde{\Upsilon}_1|.
\end{equation}

\smallbreak

On the other hand $\int_{\Upsilon_1 \setminus \tilde{\Upsilon}_1} \frac{1}{\epsilon^2}\leq \frac{K\eta^2}{\epsilon^2}$
{and $|\partial \tilde{\Upsilon}_1| \le |\partial\Upsilon_{1}| + K'\eta$.}
Choosing $\eta=\epsilon^\frac23$ and gathering these estimates, we obtain
\begin{equation}\label{final20}
\epsilon M_\epsilon \leq \epsilon \left( \frac {4}{{c}\epsilon^\frac23} + \frac {\sqrt 2}{ \epsilon } \right) {\left( |\partial\Upsilon_{1}| + K'\epsilon^\frac23\right)} + K\epsilon^\frac13.
\end{equation}
Therefore it follows straightforwardly that $\limsup (\epsilon M_\epsilon)\leq \sqrt 2 \, |\partial\Upsilon_{1}|$.
This completes the proof of the proposition.
\end{proof}



\medbreak

\subsection{Conclusions}
{F}rom Proposition~\ref{pro:masse} we know that, when $\phie\equiv0$:
\begin{equation}
 M[\kappa;0] \sim \sqrt{2\kappa}\, |\partial\Upsilon_1| \quad \text{as } \kappa\to+\infty,
\label{eq:M}
\end{equation}
i.e., there are constants $C_1,\ C_2$ such that {(cf.~\eqref{eq:lambda}):}
\begin{equation}
\kappa \sim C_1\, M^2 \quad\text{and}\quad \lambda \sim C_2\, M^\alpha \quad \text{as } M\to+\infty.
\label{eq:kappalambda}
\end{equation}
When $\phie\ne0$, it follows from~\S\ref{ssc:phie} that
\begin{equation}
C_1\, M^2 \le \kappa \le C'_1\, M^2 \quad\text{and}\quad C_2\, M^\alpha \le \lambda \le C'_2\, M^\alpha \quad \text{as } M\to+\infty.
\label{eq:kappalambda:kg}
\end{equation}

\medbreak

\noindent{\bf Acknowledgements.} \quad
The authors thank L.~Dupaigne, A.~Farina and C.~Bandle for fruitful discussions about elliptic partial differential equations, {and} the anonymous referees for their useful remarks and suggestions.

\end{document}